\newcommand{\Haus}{\dim_{\mathrm{H}}}
\newcommand{\PS}{\mathrm{PS}}
\newcommand{\diam}{\mathrm{diam}\:}
\newcommand{\calC}{\mathcal{C}}
\numberwithin{equation}{section}
\newcounter{count}
\newcommand{\num}{\stepcounter{count}\the\value{count}}
\renewcommand{\limsup}{\varlimsup}
\newtheorem{theorem}{Theorem}[section]
\newtheorem{lemma}[theorem]{Lemma}
\newtheorem{question}[theorem]{Question}
\theoremstyle{remark}
\newtheorem{remark}[theorem]{Remark}
\theoremstyle{definition}
\newtheorem{notation}[theorem]{Notation}
\begin{document}

\title[Finiteness of solutions to equations on PS sequences]{Finiteness of solutions to linear Diophantine equations on Piatetski-Shapiro sequences }

\author[K. Saito]{Kota Saito}
\email{saito.kota@nihon-u.ac.jp}

\address{Kota Saito\\ Department of Mathematics\\ College of Science $\&$ Technology \\ Nihon University\\Kanda\\ Chiyoda-ku\\ Tokyo\\
101-8308\\ Japan} 

\thanks{It is to appear in \textit{Acta Arithmetica}}

\subjclass[2020]{11D04, 11K55}
\keywords{Piatetski-Shapiro sequence, Hausdorff dimension, Diophantine equation, Diophantine approximation}

\begin{abstract}
A sequence of integers of the form $\lfloor n^{\alpha}\rfloor$ $(n=1,2,\ldots)$ for some fixed non-integral $\alpha>1$ is called a Piatetski-Shapiro sequence, where $\lfloor x\rfloor$ denotes the integer part of $x$. Let $\mathrm{PS}(\alpha)$ denote the set of all those terms. In this article, we show that $x+y=z$ has only finitely many solutions $(x,y,z)\in \mathrm{PS}(\alpha)^3$ for almost every $\alpha>3$. Furthermore, we show that $\mathrm{PS}(\alpha)$ has only finitely many arithmetic progressions of length $3$ for almost every $\alpha>10$. In addition, we estimate upper bounds for the Hausdorff dimension of the set of $\alpha\in [s,t]$ such that  $y=a_1x_1+\cdots +a_nx_n$ has infinitely many solutions on $\PS(\alpha)$. 
\end{abstract}

\maketitle

\section{Introduction}
Let $\mathbb{N}$ be the set of all positive integers. For all $x\in \mathbb{R}$, let $\lfloor x\rfloor$ be the integer part of $x$. A sequence $(a_n)_{n\in \mathbb{N}}$ of integers is called a \textit{Piatetski-Shapiro sequence} if there exists non-integral $\alpha>1$ such that $a_n=\lfloor n^\alpha \rfloor$ for every $n\in \mathbb{N}$. We set $\PS(\alpha)=\{\lfloor n^\alpha \rfloor \colon n\in \mathbb{N} \}$. In this article, for all fixed positive real numbers  $a_1,\ldots, a_n$, we investigate the finiteness of solutions $(y,x_1,\ldots, x_n)\in \PS(\alpha)^{n+1}$ to the following linear equation:
\begin{equation}\label{Equation-mainEq}
 y=a_1x_1+\cdots +a_nx_n.
\end{equation}
For simplicity, we first focus on the equation 
\begin{equation}\label{Equation-Fermat} 
x+y=z,
\end{equation}
where $(x,y,z)\in \PS(\alpha)^3$. If $\alpha=2$, then a tuple $(x,y,z)\in \PS(\alpha)^3$ satisfying \eqref{Equation-Fermat} is called a \textit{Pythagorean tuple}. There are infinitely many Pythagorean tuples since 
\[
(a^2-b^2)^2 +(2ab)^2=(a^2+b^2)^2
\]
holds for all $a,b\in \mathbb{N}$. If $\alpha\geq 3$ is an integer, then by Fermat's last theorem (the Fermat-Wiles theorem \cite[Theorem~0.5]{Wiles}), the equation \eqref{Equation-Fermat} does not have any solutions $(x,y,z)\in \PS(\alpha)^3$. Thus, if $\alpha \in \mathbb{N}$, then we can find a threshold at $\alpha=2$ or $3$ for the infiniteness or non-existence of solutions of \eqref{Equation-Fermat}. One of the motivations of this research is to extend the range of $\alpha$ from $\mathbb{N}$ to $\mathbb{R}$ and determine a threshold.  

Glasscock first began with the study of linear Diophantine equations on Piatetski-Shapiro sequences \cite{Glasscock17}. He investigated the equation
\begin{equation}\label{Equation-Glasscock}
y=ax+b
\end{equation}   
for fixed $a,b\in \mathbb{R}$ with $a\notin \{0,1\}$. Assume that \eqref{Equation-Glasscock} has infinitely many solutions $(x,y)\in \mathbb{N}^2$. In \cite{Glasscock17,Glasscock20}, he showed that 
for almost every $\alpha>1$ in the sense of one-dimensional Lebesgue measure,
\begin{itemize}
\item if $\alpha<2$, then \eqref{Equation-Glasscock} has infinitely many solutions $(x,y)\in \PS(\alpha)^2$;
\item if $\alpha>2$, then \eqref{Equation-Glasscock} has only finitely many solutions $(x,y)\in \PS(\alpha)^2$. 
\end{itemize} 
Thus, Glasssock discovered that $\alpha=2$ is a threshold for the infiniteness or finiteness of solutions of \eqref{Equation-Glasscock}. Furthermore, he proposed the following interesting question. 
\begin{question}[{\cite[Question~6]{Glasscock17}}]\label{Question-Glasscock}
Does there exist $\mathscr{G}>1$ with the property that for almost every or all $\alpha> 1$, the equation $x + y = z$ has infinitely many solutions $(x,y,z)\in \PS(\alpha)^3$ or not, according as $\alpha<\mathscr{G}$ or $\alpha>\mathscr{G}$?
\end{question}

By the result given by Frantzikinakis and Wierdl \cite[Proposition~5.1]{FrantzikinakisWierdl}, for all $1<\alpha<2$, the equation \eqref{Equation-Fermat} has infinitely many solutions $(x,y,z)\in \PS(\alpha)^3$. Further, Yoshida \cite{Yoshida} gave a quantitative result on the number of solutions  \eqref{Equation-Fermat}. For all $\alpha\in (1,6/5)$, he \cite[Corollary~1.3]{Yoshida} showed 
\begin{equation}\label{eq-Yoshida}
\lim_{x\to \infty}\frac{\# \{(\ell, m,n)\in \mathbb{N}^3 \colon n <x , \lfloor \ell^\alpha \rfloor + \lfloor m^\alpha \rfloor=\lfloor n^\alpha \rfloor \}}{x^{3-\alpha}}= \frac{\Gamma(1+1/\alpha)^2}{(3-\alpha)\Gamma(2/\alpha)}, 
\end{equation}
where $\Gamma(\cdot)$ denotes the Gamma function. Matsusaka and the author \cite{MatsusakaSaito} investigated the case when $\alpha$ is large. They \cite[Corollary~1.2]{MatsusakaSaito} showed that for every $s,t\in \mathbb{R}$ with $2<s<t$, there are uncountably many $\alpha\in [s,t]$ such that \eqref{Equation-Fermat} has infinitely many solutions $(x,y,z)\in \PS(\alpha)^3$ . Therefore, a threshold does not exist in the sense of the ``all" version of Question~\ref{Question-Glasscock}. However, we do not know whether a threshold exists or not in the sense of the ``almost every" version. In this article, we give a partial result of this problem.

\begin{theorem}\label{Theorem-main1}For almost every $\alpha>3$, the equation $x+y=z$ has only finitely many solutions $(x,y,z)\in \PS(\alpha)^3$.
\end{theorem}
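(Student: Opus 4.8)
The plan is to fix a bounded range of exponents and run the convergence half of the Borel--Cantelli lemma. Fix reals $3<c<d$; since $c>3$ is arbitrary and $\bigcup_{c>3}(c,\infty)$ is co-null in $(3,\infty)$, it suffices to prove that for almost every $\alpha\in(c,d)$ the equation $x+y=z$ has only finitely many solutions in $\PS(\alpha)^3$. Passing from solutions to index triples, put, for integers $\ell\le m<n$,
\[
E_{\ell,m,n}:=\Pas{\alpha\in(c,d)\colon \floor{\ell^\alpha}+\floor{m^\alpha}=\floor{n^\alpha}}.
\]
For $\alpha>c>1$ and $n$ large the gaps $\floor{(n+1)^\alpha}-\floor{n^\alpha}$ tend to infinity, so $\floor{n^\alpha}$ determines $n$, and likewise $\floor{\ell^\alpha}$ determines $\ell$; hence every solution $(x,y,z)$ with $z$ large corresponds to a unique triple $\ell\le m<n$ with $\alpha\in E_{\ell,m,n}$ (ordering $x\le y$ costs a harmless factor $2$), and ``finitely many solutions at $\alpha$'' is equivalent to ``$\alpha$ lies in only finitely many $E_{\ell,m,n}$''. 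By Borel--Cantelli it therefore suffices to show $\sum_{\ell\le m<n}\abs{E_{\ell,m,n}}<\infty$.

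The crux is a measure bound for each $E_{\ell,m,n}$. If $\alpha\in E_{\ell,m,n}$ then $\abs{n^\alpha-\ell^\alpha-m^\alpha}=\abs{\{n^\alpha\}-\{\ell^\alpha\}-\{m^\alpha\}}<2$, so $E_{\ell,m,n}\subseteq F_{\ell,m,n}:=\Pas{\alpha\in(c,d)\colon \abs{\phi(\alpha)}<2}$ where $\phi(\alpha):=n^\alpha-\ell^\alpha-m^\alpha$. Note $F_{\ell,m,n}\neq\emptyset$ forces $n^\alpha<2m^\alpha+2$ for some $\alpha\in(c,d)$, hence $m\ge 2^{-1/d}n$ and $n-m\le m$, so only triples with $1\le n-m\le(1-2^{-1/d})n$ and $m\asymp n$ contribute. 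On $F_{\ell,m,n}$ one has $n^\alpha=\ell^\alpha+m^\alpha+O(1)$, whence, discarding the nonnegative term $\ell^\alpha\ln(n/\ell)$ and using $\ln(n/m)=\ln\!\pas{1+\tfrac{n-m}{m}}\ge\tfrac{n-m}{2m}$,
\[
\phi'(\alpha)=n^\alpha\ln n-\ell^\alpha\ln\ell-m^\alpha\ln m=\ell^\alpha\ln\tfrac n\ell+m^\alpha\ln\tfrac nm+O(\ln n)\ \ge\ \tfrac{n-m}{2}\,m^{\alpha-1}-O(\ln n).
\]
For $n$ large the error is negligible against $m^{\alpha-1}\asymp n^{\alpha-1}\ge n^2$, so $\phi'>0$ on $F_{\ell,m,n}$ and $\phi'\gg(n-m)\,m^{\alpha-1}\ge(n-m)\,m^{c-1}$. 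Since $\phi$ is then monotone on $F_{\ell,m,n}$ with range there of length at most $4$, we get
\[
\abs{E_{\ell,m,n}}\le\abs{F_{\ell,m,n}}\ \ll\ \frac{1}{(n-m)\,m^{c-1}}
\]
(when $\ell=m$ the same holds with $2m^\alpha$ in place of $\ell^\alpha+m^\alpha$).

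Summing, and overestimating by letting $\ell$ range over all of $\{1,\dots,m\}$,
\[
\sum_{\ell\le m<n}\abs{E_{\ell,m,n}}\ \ll\ \sum_{k=1}^{(1-2^{-1/d})n}\ \sum_{\ell=1}^{m}\frac{1}{k\,m^{c-1}}=\sum_{k=1}^{(1-2^{-1/d})n}\frac{1}{k\,m^{c-2}}\ \asymp\ \frac{1}{n^{c-2}}\sum_{k=1}^{O(n)}\frac1k\ \ll\ \frac{\log n}{n^{c-2}},
\]
so $\sum_{\ell\le m<n}\abs{E_{\ell,m,n}}\ll\sum_{n}\log n\big/n^{c-2}<\infty$ precisely because $c>3$ gives $c-2>1$. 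This is exactly where the hypothesis $\alpha>3$ enters: at the threshold $c=3$ the series degenerates to $\sum 1/(n\log n)$, which diverges, so one must work on $(c,d)$ with $c>3$ strictly. The main obstacle is establishing the derivative estimate $\phi'\gg(n-m)\,m^{\alpha-1}$ uniformly in $\ell\le m$ and $\alpha\in(c,d)$ — in particular controlling the $O(\ln n)$ error terms and handling the range where $\ell$ is close to (or equal to) $m$ — together with the elementary but careful reduction to index triples in the Borel--Cantelli step; once these are in place the remaining summation is routine.
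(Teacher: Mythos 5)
Your argument is correct in substance and, at its analytic core, is the same as the paper's: the paper also reduces a solution $\floor{p^\alpha}+\floor{q^\alpha}=\floor{r^\alpha}$ to the inequality $\abs{(p/r)^\alpha+(q/r)^\alpha-1}\le r^{-\beta}$, bounds the length of the resulting set of $\alpha$'s by the mean value theorem using the derivative lower bound $\abs{\rmE'(u;p/r,q/r)}\gg\log(r/\max(p,q))$ (which is exactly your $\phi'\gg(n-m)\,m^{\alpha-1}$ after dividing by $n^{\alpha}$), and then sums over $p,q,r$ exactly as you do over $\ell,m,n$. The genuine difference is the packaging and the strength of the conclusion: the paper sums $(\diam J)^{\sigma}$ for $\sigma<1$ to show the exceptional set has Hausdorff dimension at most $3/s$ on $[s,t]$ (Theorem~\ref{Theorem-SimpleCase}), and then invokes (H3); your Borel--Cantelli version is the $\sigma=1$ endpoint of that computation and yields only Lebesgue measure zero, which is all Theorem~\ref{Theorem-main1} asks for. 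Two small points deserve repair. First, the step ``$\phi$ is monotone on $F_{\ell,m,n}$ with range of length $4$, hence $\abs{F_{\ell,m,n}}\ll 1/\inf\phi'$'' is not automatic when $F_{\ell,m,n}$ is not known to be an interval (a function can have large derivative on a disconnected set of large measure whose image is small); here it is easily fixed by noting that your derivative bound in fact gives $\phi'>0$ on the superlevel set $\{\alpha\in(c,d)\colon\phi(\alpha)>-2\}$, which forces that set to be a single interval abutting $d$, so that $F_{\ell,m,n}$ is an interval and the mean value theorem applies directly (this is why the paper works with the inverse function $\rmL$ of a monotone $\rmE$). Second, the constant in $m\ge 2^{-1/d}n$ should involve $c$ rather than $d$ (one gets $(n/m)^{c}\le(n/m)^{\alpha}<2+2m^{-\alpha}$), but this is immaterial since all that is used is $m\asymp n$.
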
  

Assume that there exists a real number $\mathscr{G}\ge 1$ such that for almost every $\alpha> 1$, the equation \eqref{Equation-Fermat} has infinitely many solutions $(x,y,z)\in \PS(\alpha)^3$ or not, according as $\alpha<\mathscr{G}$ or $\alpha>\mathscr{G}$. We have $\mathscr{G}\geq 2$ since \eqref{Equation-Fermat} has infinitely many solutions for every $\alpha\in (1, 2)$ as we mentioned above. Furthermore, Theorem~\ref{Theorem-main1} gives the upper bound for $\mathscr{G}$ as $\mathscr{G}\leq 3$. Combining these results, we have 
\[
2\leq \mathscr{G}\leq 3. 
\]
However, we cannot verify the existence or determine the exact value of $\mathscr{G}$. The author conjectures that $\mathscr{G}$ exists and $\mathscr{G}=3$. Yoshida \cite[Conjecture~4.1]{Yoshida} conjectures that \eqref{eq-Yoshida} is valid for $\alpha\in (1,2)\cup (2,3)$. Furthermore, in  \cite[Section~4]{Yoshida}, he proposed a heuristic calculation of the problem. If this conjecture was true, then $\mathscr{G}$ would exist and $\mathscr{G}=3$. 

We will give a proof of Theorem~\ref{Theorem-main1} in Section~\ref{Section-SimpleCase}. We also reveal the following result on arithmetic progressions as follows.
\begin{theorem}\label{Theorem-main2}
For almost every $\alpha >10$, the set $\PS(\alpha)$ contains only finitely many arithmetic progressions of length $3$. 
\end{theorem}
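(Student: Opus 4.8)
The plan is to run, for three-term arithmetic progressions, the Borel--Cantelli strategy behind the proof of Theorem~\ref{Theorem-main1}. Since $(10,\infty)$ is a countable union of compact intervals, it suffices to fix $[s,t]\subset(10,\infty)$ and show that the set of $\alpha\in[s,t]$ for which $\PS(\alpha)$ contains infinitely many $3$-term arithmetic progressions has Lebesgue measure zero. As $k\mapsto\lfloor k^\alpha\rfloor$ is strictly increasing for $\alpha>10$, a $3$-term AP in $\PS(\alpha)$ corresponds bijectively to a triple of positive integers $\ell<m<n$ with $\lfloor\ell^\alpha\rfloor+\lfloor n^\alpha\rfloor=2\lfloor m^\alpha\rfloor$; writing $D_{\ell,m}(\alpha):=2\lfloor m^\alpha\rfloor-\lfloor\ell^\alpha\rfloor$, this is the condition $D_{\ell,m}(\alpha)\in\PS(\alpha)$. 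Put $B_{\ell,m}:=\Pas{\alpha\in[s,t] : D_{\ell,m}(\alpha)\in\PS(\alpha)}$. For fixed $(\ell,m)$ and fixed $\alpha$ there is at most one completing index $n$, so if $\PS(\alpha)$ has infinitely many $3$-APs then $\alpha\in B_{\ell,m}$ for infinitely many pairs $(\ell,m)$; hence by the first Borel--Cantelli lemma it suffices to prove
\[
\sum_{1\le\ell<m}\abs{B_{\ell,m}}<\infty .
\]

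The next step is to exhibit $B_{\ell,m}$ as, essentially, a sublevel set. Set $h_{\ell,m}(\alpha):=\pas{2m^\alpha-\ell^\alpha}^{1/\alpha}=m\pas{2-(\ell/m)^\alpha}^{1/\alpha}$. Since $D_{\ell,m}(\alpha)=2m^\alpha-\ell^\alpha+O(1)$, one has $D_{\ell,m}(\alpha)^{1/\alpha}=h_{\ell,m}(\alpha)+O\pas{m^{1-\alpha}}$, and $D\in\PS(\alpha)$ forces $D^{1/\alpha}$ to lie within $O\pas{m^{1-\alpha}}$ of $\Z$; hence
\[
B_{\ell,m}\subseteq\Pas{\alpha\in[s,t] : \operatorname{dist}\pas{h_{\ell,m}(\alpha),\,\Z}\ll m^{1-\alpha}} .
\]
On $[s,t]$ we have $h_{\ell,m}\asymp m$ and $h_{\ell,m}'=O(m)$. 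Away from the critical points of $h_{\ell,m}$ the standard bound for the measure of a set of the above type gives $\ll m^{1-\alpha}\cdot\mathrm{TV}(h_{\ell,m})/\min\abs{h_{\ell,m}'}\ll m^{1-\alpha}$, and $\sum_{1\le\ell<m}m^{1-\alpha}<\infty$ already for $\alpha>3$ --- which is presumably the true threshold. The loss comes entirely from the critical points of $h_{\ell,m}$: near a zero of order $k$ of $h_{\ell,m}'$ a van der Corput--type sublevel estimate only yields measure $\ll\pas{m^{1-\alpha}/\abs{h_{\ell,m}^{(k+1)}}}^{1/(k+1)}\ll m^{-\alpha/(k+1)}$, using a lower bound $\abs{h_{\ell,m}^{(k+1)}}\gg m$ at that point, and then $\sum_{1\le\ell<m}m^{-\alpha/(k+1)}<\infty$ only when $\alpha>2(k+1)$.

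Thus everything reduces to a uniform bound on how degenerate $h_{\ell,m}$ can be on $[s,t]$: one must bound the order of vanishing of the derivative of the one-parameter family $\alpha\mapsto\pas{2-u^\alpha}^{1/\alpha}$ (where $u=\ell/m\in(0,1)$), uniformly in $u$, and establish a quantitative lower bound on its first non-vanishing derivative at each critical point. A critical point of $h_{\ell,m}$ of large order means exactly that $x\mapsto x^\alpha$ is affine to high order along $\{\ell,m,n\}$ --- that is, the progression $\ell<m<n$ sits very close to a configuration on which the Piatetski-Shapiro map is nearly linear. One shows that this degeneracy cannot exceed a fixed order $k_0$; substituting $k_0$ into the previous paragraph gives the threshold $\alpha>2(k_0+1)$, and $\alpha>10$ corresponds to $k_0=4$. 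Proving $k_0\le4$, together with the accompanying uniform derivative lower bounds, is the main obstacle; the remaining ingredients --- the Borel--Cantelli bookkeeping, the reduction of $D_{\ell,m}(\alpha)\in\PS(\alpha)$ to the displayed sublevel set, and the crude count $\ll Y^{2}$ of pairs $(\ell,m)$ with $m\asymp Y$ --- are routine adaptations of the argument for Theorem~\ref{Theorem-main1}. Alternatively, Theorem~\ref{Theorem-main2} should follow from the general result for $y=a_1x_1+\cdots+a_nx_n$ announced in the abstract, applied with two summands and $a_1=a_2=\tfrac12$, the coefficients being less than $1$ being precisely what forces $\ell,m,n$ to be comparable and so places one in this near-degenerate regime.
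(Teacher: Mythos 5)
Your Borel--Cantelli skeleton (reduce to $\sum_{\ell<m}|B_{\ell,m}|<\infty$, rewrite $B_{\ell,m}$ as a sublevel set for $\operatorname{dist}(h_{\ell,m}(\alpha),\mathbb{Z})$, and handle the critical points of $h_{\ell,m}$ separately) is a reasonable outline, but the proof has a genuine gap at exactly the point where all the difficulty lives: you never prove that the order of vanishing of $h_{\ell,m}'$ on $[s,t]$ is at most $k_0=4$, nor the accompanying uniform lower bound $|h_{\ell,m}^{(k_0+1)}|\gg m$ at the critical points, and you say so yourself. The value $k_0=4$ is reverse-engineered from the desired threshold $\alpha>10$ via $\alpha>2(k_0+1)$, not derived from any property of the family $\alpha\mapsto(2-u^\alpha)^{1/\alpha}$; no argument is offered for why the degeneracy could not be of higher order (or why, if it is bounded, the quantitative constants are uniform in $u=\ell/m\in(0,1)$). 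As it stands the proof establishes nothing beyond the easy regime away from critical points.

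For comparison, the paper does not bound any order of vanishing. Theorem~\ref{Theorem-main2} is deduced from Theorem~\ref{Theorem-genMain2} with $n=2$, $a_1=a_2=1/2$, and the degenerate configurations are controlled differently: one works with $\mathrm{E}(u;Q_1,Q_2)=\tfrac12 Q_1^u+\tfrac12 Q_2^u$ with $Q_1<1<Q_2$, which is strictly convex with a unique minimum value $m(Q)$ at $u_0(Q)$, and the dangerous intervals $J(q;r)$ are those with $u_0\in(\beta,\gamma)$ and $m(Q)$ close to $1$. The key counting input is Lemma~\ref{Lemma-atmostone}: as $q_2$ varies (with $q_1$, $r$ fixed), the minima $m(Q)$ are $\gg 1/r$-separated, so at most $O(1)$ values of $q_2$ land $m$ in the worst window, and a second-derivative lower bound (Lemma~\ref{Lemma-SecondDerivE}) plus the mean value theorem controls $\diam J$ in terms of $|m-1|$; optimizing the window width $X=r^{2(1-\beta)/3}$ gives the exponent $3n/(s-4)$ and hence the threshold $3n+4=10$. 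So the paper's $10$ and your $10$ arise from unrelated computations, and the agreement should not be taken as evidence that your missing lemma is true. To repair your argument you would either have to supply the degeneracy bound (which looks at least as hard as the paper's Section~\ref{Section-nonsimple}) or replace it with a separation argument of the above type.
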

By the result of Frantzikinakis and Wierdl \cite[Proposition~5.1]{FrantzikinakisWierdl}, the set $\PS(\alpha)$ contains arbitrarily long arithmetic progressions (APs) for all $\alpha\in (1, 2)$. In addition, the author and Yoshida showed that for all $1\leq \alpha<2$ and for any $B\subseteq \mathbb{N}$ with positive upper density (\textit{i.e.} $\limsup_{N\to \infty} \#(B\cap [1,N])/N>0 $), the set $\{\lfloor n^\alpha \rfloor \colon n\in B \}$ contains arbitrarily long APs \cite[Corollary~5]{SaitoYoshida}. Thus, they gave a Szemer\'edi-type theorem on $\PS(\alpha)$. Remark that Szemer\'edi's theorem states that any $B\subseteq \mathbb{N}$ with positive upper density contains arbitrarily long APs \cite[p.200]{Szemeredi}. On the other hand, the case $\alpha>2$ is not easy.  If $\alpha>2$ is an integer, then Darmon and Merel \cite[Main Theorem]{DarmonMerel} proved that $\PS(\alpha)$ does not contain any APs of length $3$. Moreover, Matsusaka and the author \cite[Corollary~1.3]{MatsusakaSaito} showed that for every $s,t\in \mathbb{R}$ with $2<s<t$, there are uncountably many $\alpha\in [s,t]$ such that $\PS(\alpha)$ contains infinitely many APs of length $3$. However, there was no result on the non-existence or finiteness of APs of length $3$ of $\PS(\alpha)$ if $\alpha>2$ is non-integral. Therefore, the main contribution of Theorem~\ref{Theorem-main2} is to establish the finiteness of such APs for almost every $\alpha>10$. However, we do not determine the infiniteness or finiteness of such APs for almost every $\alpha\in (2,10)$.

\begin{notation}
For all intervals $I\subset \mathbb{R}$, let  $I_\mathbb{Z}=I\cap \mathbb{Z}$. Further, we write $[N]=[1,N]_\mathbb{Z}$ for every $N\in \mathbb{N}$. Let $\mathcal{L}(F)$ denote the one-dimensional Lebesgue outer measure of $F$ for all $F\subseteq \mathbb{R}$. Let $P(x)$ be a statement depending on $x\in B$, where $B\subseteq \mathbb{R}$ is a Borel set. Then we say that for almost every $x\in U$, $P(x)$ is true if $\mathcal{L}(\{x\in B \colon \text{$P(x)$ is not true} \})=0$. For all $\alpha\in \mathbb{R}$ and $x>1$, we define $\log^\alpha x=(\log x)^\alpha$.   
\end{notation}

\section{General Results}

Let $\alpha>1$ be a non-integral real number, and fix positive real numbers $a_1,\ldots, a_n$. If $a_1+\cdots +a_n=1$, then each tuple $(y,x_1,\ldots, x_n)\in \PS(\alpha)^{n+1}$ with $y=x_1=\cdots =x_n$ is a solution to \eqref{Equation-mainEq}. Such a tuple is called a \textit{trivial} solution of \eqref{Equation-mainEq}. Other solutions are called \textit{non-trivial} solutions. If $a_1+\cdots +a_n\neq 1$, then all tuples $(y,x_1,\ldots, x_n)\in \PS(\alpha)^{n+1}$ satisfying \eqref{Equation-mainEq} are called \textit{non-trivial} solutions. The main goal of this article is to show the following theorems. Before stating the theorems, let $\Haus F$ denote the \textit{Hausdorff dimension} of $F\subseteq \mathbb{R}$ of which the precise definition will be given in Section~\ref{Section-SimpleCase}.   

\begin{theorem}\label{Theorem-genMain1}
Let $n\in \mathbb{N}$, and let $a_1,\ldots ,a_n$ be positive real numbers. Suppose that $a_i\geq 1$ for all $1\leq i \leq n $, or $a_1+\cdots +a_n<1$. Then for all $n+1<s<t$, we have
\begin{align*}
&\Haus 
\left\{\alpha\in [s,t]\ \colon\  
\begin{aligned}
&\text{\eqref{Equation-mainEq} has infinitely many non-trivial } \\ 
&\text{solutions } (y, x_1,\ldots ,x_n)\in \PS(\alpha)^{n+1}
\end{aligned} \right\}\leq  (n+1)/s.  
\end{align*}
In addition,  for almost every $\alpha>n+1$, the equation \eqref{Equation-mainEq} has only finitely many non-trivial solutions $(y,x_1,\ldots, x_n)\in \PS(\alpha)^{n+1}$. 
\end{theorem}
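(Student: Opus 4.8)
The plan is to count, for a fixed integer $N$, the measure of the set of $\alpha\in[s,t]$ for which equation \eqref{Equation-mainEq} has a non-trivial solution with $\max(y,x_1,\dots,x_n)\geq N$, and then show this measure is summable (or at least tends to zero fast enough) so that a Borel–Cantelli argument gives finiteness almost everywhere; simultaneously the same covering estimate yields the Hausdorff-dimension bound. Concretely, a non-trivial solution in $\PS(\alpha)^{n+1}$ comes from integers $m, k_1,\dots,k_n$ with $y=\lfloor m^\alpha\rfloor$, $x_i=\lfloor k_i^\alpha\rfloor$, so the condition $y=a_1x_1+\cdots+a_nx_n$ forces $m^\alpha = a_1k_1^\alpha+\cdots+a_nk_n^\alpha + O(1)$, where the implied constant is $1+a_1+\cdots+a_n$. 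For fixed $(m,k_1,\dots,k_n)$, define $f(\alpha)=m^\alpha-\sum_i a_ik_i^\alpha$; the set of admissible $\alpha$ is contained in $\{\alpha: |f(\alpha)|\leq C\}$. The key is that $f$ is well-separated from $0$ in the sense that $f'$ (or a higher derivative) is bounded below, so $\{|f(\alpha)|\leq C\}\cap[s,t]$ is an interval (or union of boundedly many intervals) of length $\ll C/(\text{size of derivative})$. Since the largest of $m,k_1,\dots,k_n$ is $\gg N$, the derivative $f'(\alpha)=m^\alpha\log m - \sum a_ik_i^\alpha\log k_i$ has size roughly $M^\alpha\log M$ where $M=\max$, giving an interval of length $\ll M^{-\alpha}\log M$ — crucially with exponent $\alpha\geq s>n+1$.

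The hypothesis ($a_i\geq 1$ for all $i$, or $\sum a_i<1$) is what guarantees the non-degeneracy: in the first case $m^\alpha\approx\sum a_ik_i^\alpha\geq\sum k_i^\alpha$ forces $m>k_i$ for each $i$, so the term $m^\alpha\log m$ dominates and $f'(\alpha)\gg m^\alpha\log m$ with no cancellation; in the second case $\sum a_ik_i^\alpha<\max k_i^\alpha\cdot\sum a_i<\max_ik_i^\alpha$, so $m^\alpha<\max_ik_i^\alpha+O(1)$, whence the dominant variable is some $k_j$ rather than $m$, and again $f'$ is bounded below by $\gg M^\alpha$ after checking $f'$ cannot nearly vanish (one may need the second derivative $f''$ here, or a Vandermonde-type argument on $\{m,k_1,\dots,k_n\}$ to rule out near-cancellation among the finitely many exponentials). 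Then I would sum over all tuples with $M\in[2^j,2^{j+1})$: there are $\ll M^n$ such tuples (the dominant coordinate ranges over $\asymp M$ values, the others are constrained, but crudely $\ll M^n$ suffices), each contributing an interval of length $\ll M^{-s}\log M$, for a total of $\ll M^{n-s}\log M = M^{-(s-n)}\log M$; summing the dyadic blocks $M=2^j$ with $j\geq\log_2 N$ gives $\ll N^{-(s-n)}\log N\to 0$. This proves the measure statement via Borel–Cantelli over $N$, and choosing the covering at scale $M$ gives $\sum M^n\cdot(M^{-\alpha})^{s/(n+1)}<\infty$ precisely when $s\cdot s/(n+1)>n+1$... more carefully, covering the bad set by these intervals and using the $\delta=M^{-\alpha}$-cover, the $\gamma$-dimensional sum is $\sum_j 2^{jn}(2^{-j\alpha})^\gamma$ which converges for $\gamma>(n+1)/\alpha\geq(n+1)/s$... wait, one must be careful that $\alpha$ itself varies; using $\alpha\geq s$ throughout $[s,t]$ gives the clean bound $\Haus(\cdot)\leq(n+1)/s$.

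The main obstacle I expect is the non-degeneracy estimate on $f$, i.e. showing that $f(\alpha)=m^\alpha-\sum_ia_ik_i^\alpha$ cannot be simultaneously small together with all its relevant derivatives on a long sub-interval, so that $\{|f|\leq C\}\cap[s,t]$ is genuinely short. For a single exponential minus another this is immediate from monotonicity of $(k/m)^\alpha$, but with $n\geq 2$ terms one is looking at a generalized Dirichlet polynomial $\sum_{j} c_j\lambda_j^\alpha$ with distinct positive bases $\lambda_j$ and real coefficients $c_j$ of mixed sign; such a function has at most $n$ zeros (a Descartes/Pólya-type bound), but I need a quantitative lower bound on some derivative in terms of the largest base $M$, uniformly over all configurations. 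The cleanest route is: on the interval $[s,t]$, either $|f|\geq \tfrac12 M^s$ everywhere (then no solutions at all for $M$ large), or $f$ has a zero, near which some derivative $f^{(r)}$ ($1\leq r\leq n$) satisfies $|f^{(r)}(\alpha)|\gg M^\alpha(\log M)^r$, forcing $f$ to leave the band $|f|\leq C$ within length $\ll (C/M^s)^{1/r}(\log M)^{-1}\ll M^{-s/n}$; this still beats the $M^n$ count whenever $s/n>n$, i.e. $s>n^2$ — too weak. So the real work is to get the \emph{first} derivative bounded below, which is where the sign hypotheses on the $a_i$ earn their keep: they pin down which single term dominates both $f$ and $f'$, eliminating cancellation. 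I would therefore organize the proof around a lemma of the form: \emph{under the stated hypotheses, for all sufficiently large $M$ and all admissible tuples with $\max=M$, one has $|f'(\alpha)|\geq M^\alpha$ for all $\alpha\in[s,t]$}, and then the covering/Borel–Cantelli machinery is routine.
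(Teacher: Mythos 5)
Your overall architecture is the same as the paper's: cover the exceptional set of $\alpha$ by the sets $\{\alpha\in[s,t]:|f(\alpha)|\le C\}$ indexed by integer tuples, bound the length of each such set by a lower bound on the derivative, and sum $\sigma$-th powers of the diameters (the paper normalizes by $r^{\alpha}$, working with $\rmE(\alpha)=\sum b_jQ_j^{\alpha}$ and $|\rmE(\alpha)-1|\le r^{-\beta}$, but that is cosmetic). The problem is your proposed key lemma, namely that $|f'(\alpha)|\ge M^{\alpha}$ (or $\gg M^{\alpha}\log M$) uniformly for admissible tuples. In the case $a_i\ge 1$ one indeed gets $m>\max_i k_i=:m'$, but the correct lower bound is $f'(\alpha)\ge m^{\alpha}\log m-\log(m')\sum a_ik_i^{\alpha}\gg m^{\alpha}\log(m/m')$, and $\log(m/m')$ can be as small as $1/m$ (e.g.\ $m'=m-1$); no hypothesis on the $a_i$ removes this degeneracy, so the claimed uniform bound is false and the asserted interval length $\ll M^{-s}\log M$ fails for those tuples. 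The paper's fix is to keep the factor $\log^{-1}(r/\max q)$ in the diameter bound and then control $\sum_{q<r}\log^{-\sigma}(r/q)\ll r$ for $\sigma<1$ (Lemma~\ref{Lemma-logevalution}); this costs exactly one extra power of $r$, which is why the final exponent is $(n+1)/s$ and not $n/s$. Relatedly, your tuple count per dyadic block should be $\asymp M^{n+1}$ (all of $m,k_1,\dots,k_n$ range freely up to $\asymp M$; $m$ is \emph{not} pinned down since $\alpha$ varies), consistent with the exponent $(n+1)/s$ you state at the end but not with the $2^{jn}$ you write.

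The case $\sum a_i<1$ is also only gestured at, and your suggested route (Descartes/Vandermonde bounds on zeros of Dirichlet polynomials, or a bound via the $r$-th derivative giving $s>n^2$) is either too weak or not carried out. What actually works, and what the paper does (Lemmas~\ref{Lemma-SecondDerivE} and~\ref{Lemma-smallSdiam}), is more elementary: after normalizing, the minimum value of $\rmE$ is at most $\rmE(0)=\sum b_j<1$, hence bounded away from the target value $1$; since nonemptiness of the admissible set forces some $Q_j>(1+\delta)^{1/t}$, one gets $\rmE''\gg1$, and the mean value theorem applied twice converts ``$\rmE$ attains a value far above its minimum'' into ``$\rmE'\gg1$ at the solution point,'' yielding diameter $\ll r^{-\beta}$ with no log loss. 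You would also need the paper's preliminary reduction handling solutions with repeated coordinates (which collapse to a lower-dimensional instance of the same equation and are treated by induction on $n$). In short: right strategy, correctly identified obstacle, but the non-degeneracy lemma you propose to resolve it is false as stated, and the repair requires the additional summation over near-degenerate tuples.
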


Further, in a general case, we obtain the following upper bounds.

\begin{theorem}\label{Theorem-genMain2}
Let $n\in \mathbb{N}$, and let $a_1,\ldots ,a_n$ be positive real numbers. Then for all $3n+4<s<t$, we have
\begin{align}\label{Ineuqality-genMain1}
&\Haus \left\{\alpha\in [s,t]\ \colon\  
\begin{aligned}
&\text{\eqref{Equation-mainEq} has infinitely many non-trivial } \\ 
&\text{solutions } (y, x_1,\ldots ,x_n)\in \PS(\alpha)^{n+1}
\end{aligned} \right\}\leq 3n/(s-4). 
\end{align}
In particular, for almost every $\alpha>3n+4$, the equation \eqref{Equation-mainEq} has only finitely many non-trivial solutions $(y,x_1,\ldots, x_n)\in \PS(\alpha)^{n+1}$. 

\end{theorem}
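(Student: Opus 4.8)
The plan is to reduce the general equation to a Diophantine-approximation counting problem exactly as in the proof of Theorem~\ref{Theorem-main1} (and Theorem~\ref{Theorem-genMain1}), but carried out with enough slack in the exponents to absorb the fact that the coefficients $a_i$ are now arbitrary positive reals rather than being $\geq 1$ or summing to less than $1$. Fix $n$ and $a_1,\ldots,a_n>0$. A non-trivial solution $(y,x_1,\ldots,x_n)\in\PS(\alpha)^{n+1}$ corresponds to positive integers $m,\ell_1,\ldots,\ell_n$ with $\lfloor m^\alpha\rfloor = a_1\lfloor \ell_1^\alpha\rfloor+\cdots+a_n\lfloor \ell_n^\alpha\rfloor$. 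Writing $\lfloor k^\alpha\rfloor = k^\alpha - \{k^\alpha\}$, this says that $m^\alpha$ lies within a bounded distance (depending only on the $a_i$) of $a_1\ell_1^\alpha+\cdots+a_n\ell_n^\alpha$; equivalently, for a suitable constant $C=C(a_1,\ldots,a_n)$,
\[
\bigl| m^\alpha - \textstyle\sum_{i=1}^n a_i\ell_i^\alpha\bigr| \leq C .
\]
I would then regard the left-hand side as a function of $\alpha$ on the interval $[s,t]$ and estimate, for each fixed tuple $(m,\ell_1,\ldots,\ell_n)$ in a dyadic range, the Lebesgue measure of the set of $\alpha$ for which this inequality holds. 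This is where the derivative in $\alpha$ enters: on a dyadic block where $m\asymp \ell_i \asymp N$, the $\alpha$-derivative of $m^\alpha-\sum a_i\ell_i^\alpha$ is of size roughly $N^\alpha\log N$ unless a near-cancellation forces the leading terms to agree, so the measure of bad $\alpha$ is $\ll C N^{-\alpha}/\log N$ per tuple, and there are $\ll N^{n}$ tuples in a dyadic block (one free parameter having been essentially pinned down). The extra $3n$ versus $n$ in the exponent, and the $s-4$ in the denominator, come from the coefficients: one must cover the $2n$-dimensional (or thereabouts) parameter space of the $a_i$-weighted sums, losing a factor in each variable, and one must separately handle the regime where the variables are of very different magnitudes, each of which contributes a geometric-series loss that I would bound crudely.

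Concretely, the key steps in order would be: (i) \emph{Dyadic decomposition.} Partition the solution tuples according to the dyadic scale $N=2^j$ of $\max(m,\ell_1,\ldots,\ell_n)$, and within each scale according to the dyadic scales of the individual coordinates; there are $O(j^{n})$ sub-blocks, a harmless polynomial loss. (ii) \emph{Pointwise measure bound.} On each sub-block, for a fixed tuple, bound $\mathcal L(\{\alpha\in[s,t]: |m^\alpha-\sum a_i\ell_i^\alpha|\leq C\})$ by a van-der-Corput / monotonicity argument applied to $g(\alpha)=m^\alpha-\sum a_i\ell_i^\alpha$: either $g'$ is bounded below by $c\,N^\alpha\log N$ throughout (giving the desired bound directly), or the leading exponential term cancels, which one rules out or handles separately because distinct integers have distinct growth rates. (iii) \emph{Summation.} Sum the pointwise bound over all tuples in a block to get a bound of the form $C\,N^{n-s}\cdot\mathrm{polylog}$ (here $s\leq\alpha$ on $[s,t]$), times the combinatorial factor from the sub-block decomposition and the loss from the $a_i$'s, which is where the shift to $s-4$ and the coefficient $3n$ appear. (iv) \emph{Covering and Hausdorff dimension.} For the dimension bound, for each large $J$ cover the exceptional set (those $\alpha$ admitting a solution with $\max(\cdot)\geq 2^J$) by the union over $N=2^j\geq 2^J$ of the bad sets above; summing the $\rho$-dimensional Hausdorff content of this cover and letting $J\to\infty$ shows the content vanishes whenever $\rho>3n/(s-4)$, giving \eqref{Ineuqality-genMain1}. (v) \emph{Almost-every consequence.} Taking $s\to(3n+4)^+$ and using that a countable union of null sets is null yields the ``almost every $\alpha>3n+4$'' statement, since finiteness of non-trivial solutions is equivalent to the exceptional set above being empty for some finite threshold $2^J$.

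I would reuse as much of the machinery from Sections~\ref{Section-SimpleCase} (and the proof of Theorem~\ref{Theorem-genMain1}) as possible; the new content is purely bookkeeping of exponents once the coefficients are unrestricted. The main obstacle I anticipate is step (ii) in the degenerate regime: when several of the $\ell_i$ coincide in scale with $m$ and the coefficients $a_i$ conspire so that the leading terms $m^\alpha$ and $\sum a_i\ell_i^\alpha$ nearly cancel in both value and $\alpha$-derivative over a long sub-interval. One must show this near-double-cancellation can only persist on a short $\alpha$-interval — intuitively because $m^\alpha$ and $\sum a_i\ell_i^\alpha$, if they agree to first order at some $\alpha_0$, have different second-order behaviour in $\alpha$ unless $m=\ell_i$ and the weights trivialize — and extracting a clean, tuple-uniform quantitative bound for this is the delicate point; the factor-of-$4$ loss in the denominator is essentially the price paid to make this step robust without sharp control.
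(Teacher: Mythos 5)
Your overall architecture --- cover the exceptional set of $\alpha$ by the sets where $\bigl|\sum_i b_i (q_i/r)^\alpha - 1\bigr| \leq r^{-\beta}$, bound the diameter of each piece by a first-derivative estimate in $\alpha$, and fall back on the second derivative when the first derivative degenerates --- matches the paper's. But the step you yourself flag as ``the delicate point'' is exactly where the content of the theorem lies, and your proposal does not supply the two ideas that make it work. First, when $\rmE(\alpha)=\sum_i b_i(q_i/r)^\alpha$ attains its minimum $m$ at some $u_0\in(\beta,\gamma)$ and $m$ lies in a small window just below $1+r^{-\beta}$, the set of bad $\alpha$ is an interval of diameter as large as $rX^{1/2}$ (where $[1-X,1+r^{-\beta}]$ is the window) --- far too large to sum over all $\asymp r^k$ tuples. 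The paper's resolution (Lemma~\ref{Lemma-atmostone}) is a \emph{separation} statement: for fixed $q_1,\ldots,q_{k-1}$ and $r$, as the coordinate $q_k>r$ varies, the minima $m(Q_1,\ldots,Q_{k-1},q_k/r)$ are pairwise separated by at least $b_k/r$, so at most one value of $q_k$ can place the minimum in the dangerous window; this cuts the count of genuinely bad tuples from $r^{k}$ to $r^{k-1}$. Nothing in your outline plays this role: a pointwise ``second-order behaviour differs'' argument controls the length of each bad interval but not their number, and without the counting input the sum of $\sigma$-th powers of diameters diverges.

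Second, the constants $3n$ and $s-4$ do not arise from ``covering the parameter space of the $a_i$''; they come from optimizing a free threshold $X=X(r)$ separating the regime $m\in[1-X,1+r^{-\beta}]$ (few tuples, large intervals, contribution $\ll r^{\sigma+k-1}X^{\sigma/2}$) from the regime $m<1-X$ (many tuples, where $J$ splits into the two monotone branches of $\rmE$ with diameters $\ll r^{-\beta}(\log r)X^{-1}\log^{-2}(q_k/r)$, contributing $\ll r^{-(\beta-2)\sigma-1}X^{-\sigma}r^{k}\log^{\sigma+1}r$ plus lower-order terms). Balancing with $X=r^{2(1-\beta)/3}$ gives $r^{-(\beta-4)\sigma/3+k-1}$ and hence the exponent $3k/(\beta-4)$. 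As written, your argument would not produce a convergent series in the degenerate regime, so the gap is genuine rather than cosmetic. (The preliminary reduction from arbitrary non-trivial solutions to solutions with pairwise distinct entries, at the cost of auxiliary equations in fewer variables with new positive coefficients, is also required for general $a_i$, but that part is routine.)
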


Let $a,b$ be real numbers with $a\neq 1$ and $0\leq b<a$. Suppose that 
\begin{equation}\label{Equation-Linear}
y=ax+b 
\end{equation}
has infinitely many solutions $(x,y)\in \mathbb{N}^2$. In \cite[Theorem~1.1]{Saito22}, for every $2<s<t$, the author showed
\begin{align*}
\Haus \left\{\alpha\in [s,t]\colon \text{\eqref{Equation-Linear} has infinitely many solutions $(x,y)\in \PS(\alpha)^{2}$}\right\}= 2/s.
\end{align*}
Thus, in the case $n=1$, the left-hand side of \eqref{Ineuqality-genMain1} is equal to $2/s$ if $a_1\neq 1$ is a positive rational number. In view of this result, the author conjectures that the left-hand side of \eqref{Ineuqality-genMain1} is equal to $(n+1)/s$ under suitable conditions on $a_1,\ldots, a_n$.

Theorems~\ref{Theorem-genMain1} and \ref{Theorem-genMain2} imply Theorems~\ref{Theorem-main1} and \ref{Theorem-main2}, respectively. Indeed, Theorem~\ref{Theorem-genMain1} with $n=2$ and $a_1=a_2=1$ implies Theorem~\ref{Theorem-main1}. Theorem~\ref{Theorem-genMain2} with $n=2$ and $a_1=a_2=1/2$ implies Theorem~\ref{Theorem-main2} since non-trivial solutions $(y,x_1,x_2)$ to \eqref{Equation-mainEq} are APs of length $3$ in this case.

\section{Hausdorff dimension}\label{Section-SimpleCase}
For all non-empty sets $X\subseteq \mathbb{R}$, we define $\diam X=\sup_{x,y\in X} |x-y|$ and set $\diam \emptyset =0$. Let $F\subseteq \mathbb{R}$. A family $\mathcal{U}$ of subsets of $\mathbb{R}$ is called a \textit{covering} of $F$ if  $F\subseteq \bigcup_{U\in \mathcal{U}}U$ and $\mathcal{U}$ is non-empty and at most countable. For all $\sigma>0$, we define
\[
\mathcal{H}_\infty ^{\sigma}(F) =\inf\left\{\sum_{U\in \mathcal{U}}(\diam U)^{\sigma} \colon \text{$\mathcal{U}$ is a covering of $F$}    \right\}.  
\]
Further, we define the \textit{Hausdorff dimension of $F$} as 
\begin{equation*}
\Haus F=\inf \{\sigma>0 \colon \mathcal{H}_\infty^\sigma (F)=0\}.  
\end{equation*}
This definition can be seen in \cite[p.51]{Falconer} produced by Falconer. 
We will apply the following basic properties of the Hausdorff dimension (see \cite[pp.48-49, (3.4)]{Falconer}):
\begin{enumerate} \renewcommand{\theenumi}{H\arabic{enumi}}
\renewcommand{\labelenumi}{(\theenumi)}    \setlength{\leftskip}{20pt}
\item \label{H1}for all $F\subseteq E\subseteq \mathbb{R}$, $\Haus F \leq \Haus E$;
\item \label{H2}for all $F_1,F_2,\ldots \subseteq \mathbb{R}$, $\Haus \bigcup_{j=1}^\infty F_j=\sup_{j=1,2,\ldots} (\Haus F_j)$;
\item \label{H3}if $\Haus F<1$, then $\mathcal{L}(F)=0$. 
\end{enumerate}
In order to obtain upper bounds for the Hausdorff dimension, it is useful to find a covering $\mathcal{U}$ of $F$. Indeed, let $\sigma$ be a suitable positive real number. For every $\epsilon>0$, if we can construct a covering $\mathcal{U}$ of $F$ satisfying 
\begin{equation*}
\sum_{U\in \mathcal{U}} (\diam U)^\sigma<\epsilon,
\end{equation*}
then $\mathcal{H}_\infty^\sigma(F) \leq \sum_{U\in \mathcal{U}} (\diam U)^\sigma<\epsilon$ by the definition of $\mathcal{H}^\sigma_\infty(\cdot)$. This implies $\mathcal{H}_\infty^\sigma(F)=0$. Therefore, we obtain $\Haus F\leq \sigma$ by the definition of the Hausdorff dimension. Thus, the main strategy of the proofs is to find a good covering $\mathcal{U}$ and evaluate $\diam U$ for $U\in \mathcal{U}$.

Before starting the proofs, let us prepare the $O$-notation and Vinogradov symbol. We write $O(1)$ for a bounded quantity. If this bound depends only on some parameters $a_1,\ldots, a_n$, then for instance we write $O_{a_1,a_2,\ldots, a_n}(1)$. As is customary, we often abbreviate $O(1)X$ and $O_{a_1,\ldots, a_n}(1)X$ to $O(X)$ and $O_{a_1,\ldots, a_n}(X)$ respectively for a non-negative quantity $X$. We also say $f(X) \ll g(X)$ and $f(X) \ll_{a_1,\ldots, a_n} g(X)$  as $f(X)=O(g(X))$ and $f(X)=O_{a_1,\ldots, a_n}(g(X))$ respectively, where $g(X)$ is non-negative. 

\begin{lemma}\label{Lemma-logevalution}
Let $\sigma>0$. For all $r\geq 2$, we have 
\[
\sum_{1\leq q <r}  \log^{-\sigma} (r/q)\ll_\sigma (r+r^\sigma)\log r.
\]
\end{lemma}

\begin{proof}
Since $f(u)=\log^{-\sigma} (r/u)$ is continuously increasing on $u\in [1,r-1]$, we have
\begin{align*}
\sum_{1\leq q <r}  \log^{-\sigma} (r/q)
&\leq \int_{1}^{r-1} \log^{-\sigma} (r/u) du +   \log^{-\sigma}\left(\frac{r}{r-1}\right)\\
&=\int_{1}^{r-1} \log^{-\sigma} (r/u) du+ O(r^\sigma).
\end{align*}
Furthermore, by changing variables with $x=\log (r/u)$, we obtain
\[
\int_{1}^{r-1} \log^{-\sigma} (r/u) du= r \int_{\log \frac{r}{r-1}}^{\log r} x^{-\sigma} e^{-x} dx \ll_\sigma  (r+r^\sigma)\log r,    
\]
which completes the lemma.
\end{proof}

\section{Construction of Coverings}\label{Section-General}
Let $a_1,\ldots, a_n$ be positive real numbers. For all $1<s<t$,  we define
\begin{equation*}
\mathcal{A}(s,t)=\mathcal{A}_{a_1,\ldots, a_n}(s,t)= \left\{\alpha\in [s,t]\ \colon\  
\begin{aligned}
&\text{\eqref{Equation-mainEq} has infinitely many non-trivial } \\ 
&\text{solutions } (y, x_1,\ldots ,x_n)\in \PS(\alpha)^{n+1}
\end{aligned} \right\}
\end{equation*}
By the definition of non-trivial solutions, we reform the above as 
\begin{equation*}
\mathcal{A}_{a_1,\ldots, a_n}(s,t)= \left\{\alpha\in [s,t]\ \colon\  
\begin{aligned}
&\text{\eqref{Equation-mainEq} has infinitely many solutions } \\ 
&(y, x_1,\ldots ,x_n)\in \PS(\alpha)^{n+1}\text{ with }\#\{y,x_1,\ldots,x_n\}>1
\end{aligned} \right\}.
\end{equation*}
Also, we define
\begin{align*}
\mathcal{B}(s,t)&= \mathcal{B}_{a_1,\ldots, a_n}(s,t)\\
&= \left\{\alpha\in [s,t]\ \colon\  
\begin{aligned}
&\text{\eqref{Equation-mainEq} has infinitely many solutions } \\ 
&(y, x_1,\ldots ,x_n)\in \PS(\alpha)^{n+1}\text{ with }\#\{y,x_1,\ldots,x_n\}= n+1
\end{aligned} \right\}
\end{align*}

\begin{lemma}\label{Lemma-AtoB1}For all $1<s<t$, we have
\begin{equation}\label{Equation-AtoB1}
\Haus \mathcal{A}_{a_1,\ldots, a_n}(s,t) \leq \sup_{\substack{b_1,\ldots, b_k>0 \\ k\in [n]}} \Haus \mathcal{B}_{b_1,\ldots, b_k}(s,t). 
\end{equation}
\end{lemma}
\begin{proof} If $n=1$, then \eqref{Equation-AtoB1} is trivial. We may assume that $n\geq 2$. Then, we set $\mathcal{F}=\mathcal{A}(s,t)\setminus \mathcal{B}(s,t)$. If $\mathcal{F}$ is empty, then \eqref{Equation-AtoB1} is trivial. We may assume that $\mathcal{F}$ is non-empty. We take an arbitrary $\alpha\in \mathcal{F}$. Then  \eqref{Equation-mainEq} holds for infinitely many $(y,x_1,\ldots, x_n)\in\PS(\alpha)^{n+1}$ with $1< \#\{y,x_1,\ldots,x_n\}<n+1$. Take such a solution $(y,x_1,\ldots, x_n)$. Then, the following \eqref{itemi} or \eqref{itemii} is true:  
\begin{enumerate} \renewcommand{\theenumi}{\roman{enumi}}
\renewcommand{\labelenumi}{(\theenumi)}
\item \label{itemi} $y=x_i$ for some $i\in[n]$; 
\item \label{itemii} $x_i=x_j$ for some $i\neq j$.
\end{enumerate}
If \eqref{itemi} is true, then we see that $y =a_1 x_1 +\cdots +a_nx_n > a_i y$, 
which implies $a_i<1$. Thus, we have $1-a_i>0$ and 
\begin{equation}\label{Equation-newone}
y= \frac{a_1}{1-a_i} x_1 +\cdots + \frac{a_{i-1}}{1-a_i}x_{i-1}+\frac{a_{i+1}}{1-a_i}x_{i+1}+\cdots+\frac{a_n}{1-a_i}x_n.    
\end{equation}
The number of the variables $x_k$ is $n-1$ in the equation \eqref{Equation-newone}, and all of its coefficients are positive. If \eqref{itemii} is true, then we also obtain a linear Diophantine equation with $n-1$ variables $x_k$ and positive coefficients.

Therefore, there exists a  finite set $V\subset \mathbb{R}^{n-1}$ such that
\begin{align*}\label{Equaiton-union1} 
\mathcal{A}_{a_1,\ldots, a_n}(s,t)&=\mathcal{B}_{a_1,\ldots, a_n}(s,t) \cup\mathcal{F}\\
&\subseteq \mathcal{B}_{a_1,\ldots, a_n}(s,t) \cup \bigcup_{ \substack {(b_1,\ldots ,b_{n-1}) \in V\\ b_1,\ldots ,b_{n-1}>0 }} \mathcal{A}_{b_1,\ldots, b_{n-1}}(s,t).
\end{align*}
By iterating a similar manner for $\mathcal{A}_{b_1,\ldots, b_{n-1}}(s,t)$,  there exists a finite set $V'\subset \bigcup_{k=1}^n \mathbb{R}^k$ such that 
\[
\mathcal{A}_{a_1,\ldots, a_n}(s,t)\subseteq \bigcup_{\substack{(b_1,\ldots, b_k)\in V' \\ k\in[n] }}   \mathcal{B}_{b_1,\ldots, b_k}(s,t).
\]  
By \eqref{H1}, \eqref{H2} in Section~\ref{Section-SimpleCase}, we conclude Lemma~\ref{Equation-AtoB1}. 
\end{proof}

\begin{lemma}\label{Lemma-AtoB2}
Suppose that $a_i\geq 1$ for all $i\in [n]$. Then for all $1<s<t$, we have
\[
\Haus \mathcal{A}_{a_1,\ldots, a_n}(s,t) \leq \sup_{\substack{b_1,\ldots, b_k\geq 1,\\   k\in [n]}} \Haus \mathcal{B}_{b_1,\ldots, b_k}(s,t). 
\]
\end{lemma}
\begin{proof}
Let $\mathcal{F}$ be as in the proof of Lemma~\ref{Lemma-AtoB1}. Similarly with the proof of Lemma~\ref{Lemma-AtoB1}, we may assume that $n\geq 2$ and $\mathcal{F}$ is non-empty. Take an arbitrary $\alpha\in \mathcal{F}$. Since $a_i\geq 1$ for all $i\in [n]$, the equation \eqref{Equation-mainEq} does not have any non-trivial solutions $(y,x_1,\ldots, x_n)\in \PS(\alpha)^{n+1}$ with $y=x_i$ for some $i\in [n]$. Therefore, \eqref{itemii} in the proof of Lemma~\ref{Lemma-AtoB1} only happens. Since  $a_i+a_j\geq 1$ holds for all $1\leq i<j\leq n$, similarly with the proof of Lemma~\ref{Lemma-AtoB1}, we obtain Lemma~\ref{Lemma-AtoB2}.
\end{proof}

\begin{lemma}\label{Lemma-AtoB3}Suppose that $a_1+\cdots+a_n<1$. Then for all $1<s<t$, we have
\begin{equation*}
\Haus \mathcal{A}_{a_1,\ldots, a_n}(s,t) \leq \sup_{\substack{b_1,\ldots, b_k>0, \\ b_1+\cdots+b_k<1, \\ k\in [n]}} \Haus \mathcal{B}_{b_1,\ldots, b_k}(s,t).
\end{equation*}
\end{lemma}

\begin{proof}
Since $a_i<1$ for all $i\in [n]$ and  $a_1+\cdots+a_n<1$, we have
\[
\frac{a_1}{1-a_i}  +\cdots + \frac{a_{i-1}}{1-a_i}+\frac{a_{i+1}}{1-a_i}+\cdots+\frac{a_n}{1-a_i}< \frac{1-a_i}{1-a_i}=1.
\]
Therefore, we similarly obtain Lemma~\ref{Lemma-AtoB3} to the proof of Lemma~\ref{Lemma-AtoB1}. 
\end{proof}

By the above lemmas, it suffices to estimate upper bounds for $\Haus \mathcal{B}_{b_1,\ldots, b_k}(s,t)$. \\

Fix arbitrary $k\in [n]$ and real numbers $b_1,\ldots,b_k>0$. Let $1<\beta<s<t<\gamma$. Set $\mathcal{B}(s,t)=\mathcal{B}_{b_1,\ldots , b_k}(s,t)$. We do not indicate the dependencies on the numbers $b_1,\ldots, b_k$, $k$, $n$, $\beta$, $s$, $t$, $\gamma$. We consider these numbers as absolute constants. Let $M_0$ be a sufficiently large real number. For all $u\in \mathbb{R}$ and $Q_1,\ldots, Q_k>0$, we define 
\begin{equation}\label{equation-defE}
E (u)=E(u;Q_1,\ldots, Q_k)=b_1 Q_1^u +\cdots +b_kQ_k^u.
\end{equation}
For all $r, q_1,\ldots ,q_k\in \mathbb{N}$, we define
\begin{gather*}
q=(q_1,\ldots,q_k),\quad q/r=(q_1/r,\ldots, q_k/r), \quad Q_j=q_j/r , \quad Q=(Q_1,\ldots, Q_k)
\end{gather*}
and 
\begin{align*}
J(q;r)&=E^{-1}([1-r^{-\beta},1+r^{-\beta} ]; q/r ) \cap [s,t]  \\ 
&=\{\alpha\in [s,t]\colon E(\alpha; q/r )\in [1-r^{-\beta},1+r^{-\beta}] \}.
\end{align*}

\begin{lemma}\label{Lemma-B(M)}
For every $M\geq M_0$, we have
\begin{equation}\label{Inequality-Maincovering}
\mathcal{B}(s,t) \subseteq \bigcup_{r\geq M}  \bigcup_{\substack{1\leq q_j < B_j r \\ q_j \neq r, \ \forall j\in [k] } }  J(q ; r),
\end{equation}
where $B_j=\max(b_j^{-1/\beta},\ b_j^{-1/\gamma} )$ for every $j\in [k]$. 
\end{lemma}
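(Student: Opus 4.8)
The plan is to mimic the derivation of Lemma~\ref{Lemma-Covering-SimpleCase}, translating the Diophantine equation on Piatetski-Shapiro sequences into a covering of $\mathcal{B}(s,t)$ by the intervals $J(q_1,\ldots,q_k;r)$. First I would take $\alpha\in\mathcal{B}(s,t)$. By definition there are infinitely many solutions $(y,x_1,\ldots,x_k)\in\PS(\alpha)^{k+1}$ with $\#\{y,x_1,\ldots,x_k\}=k+1$, so write $y=\lfloor r^\alpha\rfloor$ and $x_j=\lfloor q_j^\alpha\rfloor$ for distinct positive integers $r,q_1,\ldots,q_k$, and among these infinitely many tuples fix one with $r\geq M$. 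Dividing the relation $b_1 q_1^\alpha+\cdots+b_k q_k^\alpha = r^\alpha + O_{b_1,\ldots,b_k}(1)$ by $r^\alpha$ gives
\[
\left| \rmE(\alpha;q_1/r,\ldots,q_k/r) - 1 \right| = b_1(q_1/r)^\alpha+\cdots+b_k(q_k/r)^\alpha - 1 = O_{b_1,\ldots,b_k}(r^{-\alpha}),
\]
and since $\alpha\geq s>\beta$ the right-hand side is $O(r^{-s})\leq r^{-\beta}$ once $r$ (hence $M$) is large enough; this is exactly the condition $\alpha\in J(q_1,\ldots,q_k;r)$.

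The remaining point is the range of summation over the $q_j$. Since $b_j(q_j/r)^\alpha \leq \rmE(\alpha;q_1/r,\ldots,q_k/r)\leq 1+r^{-\beta}\leq 2$, we get $q_j/r \leq (2/b_j)^{1/\alpha}$. Using $s\leq\alpha\leq t$ and $s>\beta$, $t<\gamma$, a crude bound gives $q_j/r\leq \max(b_j^{-1/\beta},b_j^{-1/\gamma})\cdot 2^{1/s} $; to land precisely at the bound $B_j=\max(b_j^{-1/\beta},b_j^{-1/\gamma})$ in the statement, I would be slightly more careful — note $(q_j/r)^\alpha\leq \rmE/b_j \leq (1+r^{-\beta})/b_j$, so $q_j/r\leq \big((1+r^{-\beta})/b_j\big)^{1/\alpha}$, and this is at most $B_j$ for all large $r$ because $(1+r^{-\beta})^{1/\alpha}\to 1$ while $b_j^{-1/\alpha}\leq \max(b_j^{-1/\beta},b_j^{-1/\gamma})=B_j$ (the exponent $1/\alpha$ lies between $1/\gamma$ and $1/\beta$, so $b_j^{-1/\alpha}$ lies between $b_j^{-1/\gamma}$ and $b_j^{-1/\beta}$, hence is $\leq B_j$). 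So, after possibly enlarging $M$, we obtain $1\leq q_j<B_j r$ for each $j$, together with $q_j\neq r$ (the $x_j$ are distinct from $y$), and $\alpha\in J(q_1,\ldots,q_k;r)$. Taking the union over all admissible $(r,q_1,\ldots,q_k)$ yields \eqref{Inequality-Maincovering}.

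I would also remark, as in Remark~\ref{Remark-M0}, that $M$ must be taken large enough that $[1-r^{-\beta},1+r^{-\beta}]\subseteq(0,2)$ for all $r\geq M$, so that $J(q_1,\ldots,q_k;r)=\rmE^{-1}([1-r^{-\beta},1+r^{-\beta}];q_1/r,\ldots,q_k/r)\cap[s,t]$ is meaningfully defined. The statement as written asserts the inclusion for \emph{every} $M\in\mathbb{N}$, which is immediate since enlarging $M$ only shrinks the right-hand union's index set in the ``$r\geq M$'' direction but — wait, that makes the right side smaller, so strictly one proves it for all sufficiently large $M$ and the claim for small $M$ follows because the union over $r\geq M$ contains the union over $r\geq M_0$; I would phrase the proof as: pick $M_0$ as above, prove the inclusion for $M\geq M_0$, and for $M<M_0$ note $\bigcup_{r\geq M}\supseteq\bigcup_{r\geq M_0}$.

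The main obstacle, such as it is, is purely bookkeeping: pinning down how large $M_0$ must be so that simultaneously (a) the $O_{b_1,\ldots,b_k}(r^{-s})$ error is $\leq r^{-\beta}$, (b) $\big((1+r^{-\beta})/b_j\big)^{1/\alpha}<B_j$ for all $\alpha\in[s,t]$ and all $j$, and (c) $1+r^{-\beta}<2$. Each of these is a single elementary inequality in $r$ with constants depending only on the fixed data $b_1,\ldots,b_k,\beta,s,t,\gamma$, so choosing $M_0$ to be the maximum of the three resulting thresholds finishes the argument. There is no genuine difficulty here — the lemma is the straightforward generalization of Lemma~\ref{Lemma-Covering-SimpleCase}, and the real work of the paper lies in the subsequent estimation of $\diam J(q_1,\ldots,q_k;r)$ (Step~II).
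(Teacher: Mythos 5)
Your overall route is the same as the paper's: extract a solution with $r\geq M$ (and $M$ large), divide by $r^{\alpha}$ to place $\rmE(\alpha;q/r)$ in $[1-r^{-\beta},1+r^{-\beta}]$, and bound each $q_j$ by comparing $b_j(q_j/r)^{\alpha}$ with $1+r^{-\beta}$; the reduction of ``every $M$'' to ``every sufficiently large $M$'' via monotonicity of the union is also fine. There is, however, one step that fails as written: your claim that $\bigl((1+r^{-\beta})/b_j\bigr)^{1/\alpha}\leq B_j$ for all large $r$ is false when $b_j=1$. In that case $B_j=1$ and $b_j^{-1/\alpha}=B_j$ exactly, so the factor $(1+r^{-\beta})^{1/\alpha}>1$ pushes the bound strictly above $B_j$ for \emph{every} $r$; the limit argument ``$(1+r^{-\beta})^{1/\alpha}\to 1$ while $b_j^{-1/\alpha}\leq B_j$'' only works when there is a uniform strict gap $b_j^{-1/\alpha}<B_j$, which holds for $b_j\neq 1$ (using $\beta<s\leq\alpha\leq t<\gamma$) but not for $b_j=1$. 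Since $b_j=1$ is exactly the case needed for Theorem~\ref{Theorem-main1}, this must be repaired. The paper does so by splitting cases: from $b_j(q_j^{\alpha}-1)<\lfloor r^{\alpha}\rfloor\leq r^{\alpha}$ one gets $q_j<b_j^{-1/\alpha}r+1$, and when $b_j=1$ this reads $q_j<r+1$, whence the integrality of $q_j$ together with $q_j\neq r$ (which you already have from $\#\{y,x_1,\ldots,x_k\}=k+1$) forces $q_j<r=B_jr$; when $b_j\neq 1$ the strict gap absorbs the $+1$ for $r$ large. With that one-line case distinction your argument is complete and coincides with the paper's.
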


\begin{proof}
Fix an arbitrary $\alpha\in \mathcal{B}(s,t)$. Take any $M\geq M_0$. Then, there are infinitely many $(r,q_1,\ldots, q_k)\in \mathbb{N}^{k+1}$ with $\#\{r,q_1,\ldots, q_k\}=k+1$ and $r\geq M$ such that 
\begin{equation}\label{Equation-rbq}
\lfloor r^\alpha \rfloor = b_1\lfloor q_1^\alpha \rfloor +\cdots +b_k \lfloor q_k^\alpha \rfloor. 
\end{equation}
Take any $j\in [k]$. Then, we obtain 
\[
b_j(q_j^\alpha-1) < b_1\lfloor q_1^\alpha \rfloor +\cdots +b_k \lfloor q_k^\alpha \rfloor=\lfloor r^\alpha \rfloor\leq r^\alpha, 
\]
which implies that
\begin{equation}\label{Inequality-qjbj}
q_j< (b_j^{-1} r^\alpha +1)^{1/\alpha}\leq b_j^{-1/\alpha} r+1.
\end{equation}
If $b_j=1$, then $B_j=\max(b_j^{-1/\beta},\ b_j^{-1/\gamma} )=1$, and hence $q_j<r=B_j r$ by \eqref{Inequality-qjbj} and $q_j\neq r $. If $b_j\neq 1$, then we have $q_j<B_j r$ by \eqref{Inequality-qjbj} and $r\geq M\geq M_0$. 

In addition, by \eqref{Equation-rbq}, we have
\[
\left|b_1\left(\frac{ q_1}{ r}\right)^\alpha +\cdots + b_k\left(\frac{ q_k}{ r}\right)^\alpha -1 \right| \ll \frac{1}{r^\alpha}.
\]
Therefore, $E(\alpha; q/r)\in [1-r^{-\beta},1+r^{-\beta}]$ and $\alpha\in [s,t]$ since $M_0$ is sufficiently large. Thus, $\alpha\in J(q; r)$ holds, and hence we conclude \eqref{Inequality-Maincovering}. 
\end{proof}

For all $r\in \mathbb{N}$ and $j\in [k]$, we define
\[
I_j^{(0)}=I_j^{(0)}(r)= [1,r)_{\mathbb{Z}}\quad \text{ and }\quad I_j^{(1)}=I_j^{(1)}(r)=(r,B_jr)_{\mathbb{Z}}.  
\]
For every $\nu=(\nu_1,\ldots,\nu_k)\in \{0,1\}^k$, we set 
\[
I^{\nu}=I_1^{(\nu_1)}\times \cdots \times I_k^{(\nu_k)}, 
\]
where $I_1^{(\nu_1)}\times \cdots \times I_k^{(\nu_k)}=\emptyset$ if $I_i^{(\nu_i)}=\emptyset$ for some $i\in [k]$. By Lemma~\ref{Lemma-B(M)}, we obtain
\begin{equation}\label{Inclusion-Covering}
\mathcal{B}(s,t) \subseteq \bigcup_{\nu\in \{0,1\}^k} \bigcup_{r\geq M}  \bigcup_{\substack{q\in I^{\nu} } }  J(q ; r),
\end{equation}  
We decompose the above union into $3$ cases as follows:
\begin{itemize} 
\item (Case~0) $\nu_1=\cdots =\nu_k=0$;
\item (Case~1) $\nu_1=\cdots =\nu_k=1$;
\item (Case~2) otherwise.
\end{itemize}
For all $\nu\in \{0,1\}^n$ and $M\geq M_0$, we set 
\begin{equation*}
\mathcal{C}^{\nu}(M)= \{J(q; r)\colon r\geq M,\  q\in I^\nu \}
\end{equation*}
Then we have 
\[
\bigcup_{\nu\in \{0,1\}^k} \mathcal{C}^{\nu}(M)=\mathcal{C}_0(M)\cup \mathcal{C}_1(M) \cup \mathcal{C}_2(M),
\]
 where 
\begin{gather*}
\mathcal{C}_0(M)=\mathcal{C}^{(0,\ldots, 0)}(M),\quad \mathcal{C}_1(M)=\mathcal{C}^{(1,\ldots, 1)}(M),\quad
\mathcal{C}_2(M)=\bigcup_{\substack{\nu\in \{0,1\}^k \\ \#\{\nu_1,\ldots, \nu_k\}>1 }} \mathcal{C}^{\nu} (M).  
\end{gather*}
By \eqref{Inclusion-Covering}, for every $M\geq M_0$, we obtain
\begin{equation}\label{Inclusion-cases}
\mathcal{B}(s,t) \subseteq \left(\bigcup_{J\in \calC_0(M)} J\right) \cup \left( \bigcup_{J\in \calC_1(M)} J \right)\cup \left(\bigcup_{J\in \calC_2(M)} J  \right).
\end{equation}

\begin{remark}\label{Remark-SimpleCase1} 
If $b_j\geq 1$ for all $j\in [k]$, then $B_j\leq 1$, and hence $I^{(1)}_j=\emptyset$ for all $j\in[k]$. Therefore, in this case,  for every $M\geq M_0$, we have 
\begin{equation}\label{Inequality-covering1}
\mathcal{B}(s,t) \subseteq  \bigcup_{J\in \calC_0(M)} J.
\end{equation}
\end{remark}

\section{Cases~0 and 1}

\begin{lemma}[Case~0]\label{Lemma-BasicCase1}
Let $\sigma> (k+1)/\beta$. For all $M\geq M_0$, we have
\begin{align*}
\sum_{J\in \calC_{0}(M)} (\diam J)^\sigma <\infty .
\end{align*}

\end{lemma}

\begin{proof}
Take any $q=(q_1,\ldots, q_k)\in I^{(0,\ldots ,0)}$ and $r\geq M$. Then $E(u)=E(u;q/r)$ is monotonically decreasing and continuously differentiable on $[0,\infty)$ since $1\leq q_1,\ldots, q_k <r$. Hence, the inverse function of $E$
\[
L(\ \cdot\ )=L(\ \cdot\ ;q/r): (0,\sum_{i\in [k]} b_i]\ni E(u) \longmapsto u\in [0,\infty) 
\]
exists. The function $L$ is also monotonically decreasing and continuously differentiable. By setting 
 $A= [1-r^{-\beta}, 1+r^{-\beta}]\cap [E(s),E(t)]$, we obtain
\[
J(q; r)=E^{-1} ([1-r^{-\beta}, 1+r^{-\beta}]; q/r )\cap [s,t] = L(A)
\]
since $L$ is injective. Hence, by the mean value theorem, there exists $\theta\in A$ such that
\[
\diam J(q; r) \leq L(1-r^{-\beta})-L(1+r^{-\beta})= -2r^{-\beta} L'(\theta)=2r^{-\beta}\frac{1}{-E'(L (\theta) ) }.  
\]
Let $\max q= \max (q_1,\ldots ,q_k)$. By $\theta\in A\subseteq [1-r^{-\beta}, 1+r^{-\beta}]$, we obtain
\begin{align*}
-E'(L (\theta) ) &= b_1 (q_1/r)^{L(\theta)} \log (r/q_1) +\cdots + b_k (q_k/r)^{L(\theta)} \log (r/q_k)\\
&\geq E(L(\theta)) \log(r/\max q)\gg \log(r/\max q).
\end{align*}
Therefore, we have $\diam J(q; r)\ll r^{-\beta}/\log (r/\max q )$, which leads to
\begin{align*}
\sum_{J\in \calC_{0}(M)} (\diam J)^\sigma &= \sum_{r\geq M} \sum_{q\in I^{(0,\ldots, 0)}} (\diam J(q;r))^\sigma \\
&\ll   \sum_{r\geq M} \sum_{q\in I^{(0,\ldots, 0)}} \left(\frac{r^{-\beta}}{\log (r/\max q)} \right)^\sigma\\
&\ll \sum_{r\geq M} r^{-\beta \sigma +k-1} \sum_{q_k\in [r-1]} (\log (r/q_k))^{-\sigma}.
 \end{align*}
By Lemma~\ref{Lemma-logevalution}, the most right-hand side of the above inequality is $ \ll\sum_{r\geq M} r^{-\beta \sigma +k}\log r$. Since this series converges if $-\beta \sigma+k<-1$, we complete the lemma.
\end{proof}

\begin{lemma}[Case~1]\label{Lemma-BasicCase2}
Let $\sigma> (k+1)/\beta$. For all $M\geq M_0$, we have
\begin{align*}
\sum_{J\in \calC_{1}(M)} (\diam J)^\sigma <\infty .
\end{align*}
\end{lemma}

\begin{proof}We omit the proof because it is similar to Lemma~\ref{Lemma-BasicCase1}. 
\end{proof}

By combining Lemmas~\ref{Lemma-BasicCase1} and \ref{Lemma-BasicCase2},  if $\beta>k+1$, then each $\sigma \in ((k+1)/\beta, 1)$ satisfies
\begin{equation}\label{Series-Case1Case2}
\sum_{J\in \calC_0 (M)\cup \calC_1 (M)}  \left( \diam J\right)^ \sigma<\infty.
\end{equation}
Therefore, the left-hand side of \eqref{Series-Case1Case2} goes to $0$ as $M\to \infty$. 

\section{Case~2 with \texorpdfstring{$b_1+\cdots + b_k<1$}{the sum of b's less than 1}}\label{Section-Case3smallS}
Fix any $\nu=(\nu_1,\ldots,\nu_k)\in \{0,1\}^k$ with $\#\{\nu_1,\ldots, \nu_k\}>1$. We assume that  $I_{1}^{(\nu_1)}$,\ldots, $I_{k}^{(\nu_k)}$ are non-empty. Let $M_0$ be a sufficiently large positive number. Suppose that $M\geq M_0$, $r\geq M$, and $q=(q_1,\ldots, q_k)\in I^\nu$. Recall that 
\[
Q_j=q_j/r\ (\text{for }j\in [k])\quad \text{and} \quad Q=(Q_1,\ldots, Q_k).
\]

\begin{lemma}\label{Lemma-SecondDerivE} For all $u>0$ and $j\in [k]$, we have
\begin{equation}\label{Inequality-SecondDerivE}
E''(u;Q)\gg  Q_j^u (\log Q_j)^2,
\end{equation}
where the implicit constant is absolute. Furthermore, if $b_1+\cdots +b_k<1$ and $J(q; r)$ is non-empty, then for all $u>0$
\begin{equation}\label{Inequality-lowerforsecond}
E''(u;Q)\gg 1,
\end{equation}
where the implicit constant is absolute.
\end{lemma}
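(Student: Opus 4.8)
The plan is to compute $\rmE''(u;Q)$ directly and extract each summand. Since $\rmE(u;Q)=\sum_{i\in[k]} b_i Q_i^u$, differentiating twice gives $\rmE''(u;Q)=\sum_{i\in[k]} b_i Q_i^u (\log Q_i)^2$. Every term is non-negative because $b_i>0$ and $(\log Q_i)^2\geq 0$, so for a fixed $j\in[k]$ we may simply drop all terms except $i=j$ to obtain $\rmE''(u;Q)\geq b_j Q_j^u(\log Q_j)^2$. As $b_j$ is one of the fixed constants $b_1,\ldots,b_k$, this is exactly $\rmE''(u;Q)\gg Q_j^u(\log Q_j)^2$, which is \eqref{Inequality-SecondDerivE}. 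One should note that $j$ must be chosen with $Q_j\neq 1$ for the bound to be non-vacuous; but in Case~3 at least one index $i$ has $\nu_i=0$ (so $Q_i<1$) and at least one has $\nu_i=1$ (so $Q_i>1$), so such indices exist, and for the indices with $q_j=r$ the bound is trivially true with both sides zero. Actually, re-reading, since $q_j\neq r$ is forced in $I^\nu$, we have $Q_j\neq 1$ for every $j$, so $(\log Q_j)^2>0$ always.

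For the second assertion, assume $b_1+\cdots+b_k<1$ and that $J(q;r)$ is non-empty. Non-emptiness means there is some $\alpha\in[s,t]$ with $\rmE(\alpha;Q)\in[1-r^{-\beta},1+r^{-\beta}]$; in particular, the minimum value $m=m(Q)$ satisfies $m\leq 1+r^{-\beta}$, and for $M$ large we may assume $m\leq 2$, say. The key point is to relate $\rmE''$ to $\rmE$ itself via a pointwise inequality. First I would bound, for each $i$, the quantity $Q_i^u(\log Q_i)^2$ from below in terms of $Q_i^u$: this fails uniformly when $Q_i\to 1$, so instead the right move is to argue that $\rmE$ cannot be small unless some $Q_i$ is bounded away from $1$ in a way that produces a definite second derivative. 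Concretely, since $\rmE(u;Q)=b_1Q_1^u+\cdots+b_kQ_k^u$ and $\sum b_i<1$, if all $Q_i$ were close to $1$ then $\rmE(u;Q)$ would be close to $\sum b_i<1$, staying below $1-r^{-\beta}$ for large $r$ — contradicting non-emptiness unless the $Q_i$ are not all close to $1$. The cleanest execution: use that on Case~3 there is an index with $Q_i>1$, namely $q_i>r$ so $q_i\geq r+1$ and $Q_i\geq 1+1/r$; but that only gives $(\log Q_i)^2\gtrsim 1/r^2$, which is too weak. So the genuine argument must use non-emptiness of $J(q;r)$ to force one of the $Q_i$ to be bounded away from $1$ by an absolute constant. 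Indeed: since $\rmE(\alpha;Q)\geq 1-r^{-\beta}\geq 1/2$ for large $r$, and $\rmE(\alpha;Q)\leq \sum_{i} b_i \max(Q_i^s,Q_i^t)\leq (\sum b_i)\max_i \max(Q_i^s,Q_i^t)$, we get $\max_i\max(Q_i^s,Q_i^t)\geq 1/(2\sum b_i)>1/2$, hence some $Q_i\geq (1/(2\sum b_i))^{1/t}=:c_0>$ a constant, or some $Q_i\leq$ a constant $<1$. Wait — $Q_i<1$ always gives $Q_i^s,Q_i^t<1$; the large values come from $Q_i>1$. So some $Q_i\geq c_0>1$ with $c_0$ depending only on $\sum b_i$ and $t$. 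Hmm, but $\max(Q_i^s,Q_i^t)$ with $Q_i>1$ is $Q_i^t$, so $Q_i\geq c_0>1$ for an absolute $c_0$. Then $(\log Q_i)^2\geq(\log c_0)^2\gg 1$, and $Q_i^u\geq Q_i^0\cdot$ (something); for $u\in[s,t]$, $Q_i^u\geq Q_i^s\geq 1$, so $\rmE''(u;Q)\geq b_i Q_i^u(\log Q_i)^2\gg 1$. For $u>0$ outside $[s,t]$, $Q_i^u\geq \min(Q_i^0, \ldots)$; since $Q_i>1$, $Q_i^u>1$ for $u>0$, so $\rmE''(u;Q)\gg 1$ holds for all $u>0$.

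The main obstacle is the second bound \eqref{Inequality-lowerforsecond}: the naive termwise estimate degenerates as the $Q_i$ approach $1$, so one must genuinely exploit the hypothesis $b_1+\cdots+b_k<1$ together with non-emptiness of $J(q;r)$ to pin down an index $i$ with $Q_i$ bounded away from $1$ by a constant depending only on the fixed data $b_1,\ldots,b_k,t$ (and, for large $M$, the $r^{-\beta}$ slack is negligible). Once that index is isolated, \eqref{Inequality-lowerforsecond} follows from \eqref{Inequality-SecondDerivE} applied to that $i$. I would also double-check the edge case where $J(q;r)$ is non-empty but consists of a single point, and confirm that the constant $M_0$ can be chosen uniformly in $q$ and $r$ — it can, since all estimates depend only on $b_1,\ldots,b_k,k,s,t,\beta$, which are fixed throughout the section.
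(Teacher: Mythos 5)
Your proof of \eqref{Inequality-SecondDerivE} is exactly the paper's: differentiate twice, note that every summand $b_iQ_i^u(\log Q_i)^2$ is non-negative, and keep only the $j$-th term. Your strategy for \eqref{Inequality-lowerforsecond} is also the paper's strategy, namely to use $S:=b_1+\cdots+b_k<1$ together with non-emptiness of $J(q;r)$ to produce an index with $Q_i$ bounded away from $1$ from above by a constant, and then to invoke \eqref{Inequality-SecondDerivE} for that index. However, the quantitative step as you wrote it does not close. You weakened $\rmE(\alpha;Q)\geq 1-r^{-\beta}$ to $\rmE(\alpha;Q)\geq 1/2$, deduced $\max_i\max(Q_i^s,Q_i^t)\geq 1/(2S)$, and concluded that some $Q_i\geq(1/(2S))^{1/t}>1$. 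That inference fails whenever $S\geq 1/2$: then $1/(2S)\leq 1$ and the bound is compatible with all $Q_i\leq 1$. The entire content of the hypothesis $S<1$ is that $1-r^{-\beta}$ eventually exceeds $S$ by a definite margin, and rounding $1-r^{-\beta}$ down to $1/2$ throws that margin away.

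The repair is one line and is what the paper does. Fix $\delta>0$ with $S(1+\delta)<1$ (for instance $\delta=(1/S-1)/2$) and suppose, for contradiction, that $Q_j\leq(1+\delta)^{1/t}$ for all $j\in[k]$. Then for a witness $u'\in[s,t]$ of non-emptiness one gets
\[
\rmE(u';Q)\leq S(1+\delta)^{u'/t}\leq S(1+\delta)<1-M_0^{-\beta}\leq 1-r^{-\beta}
\]
once $M_0$ is large (depending only on $S$ and $\beta$), contradicting $\rmE(u';Q)\geq 1-r^{-\beta}$. Hence some $Q_j>(1+\delta)^{1/t}>1$, and \eqref{Inequality-SecondDerivE} with this $j$ gives $\rmE''(u;Q)\geq b_jQ_j^u(\log Q_j)^2\gg 1$ for all $u>0$, since $Q_j^u\geq 1$ and $(\log Q_j)^2\geq(\tfrac1t\log(1+\delta))^2$. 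The rest of your write-up (the remark that $q_j\neq r$ forces $Q_j\neq 1$, and the uniformity of $M_0$ in $q$ and $r$) is fine.
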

\begin{proof} 
For all $u>0$ and $j\in [k]$, we have
\begin{align*}
&E''(u;Q)\geq b_1 Q_1^u (\log Q_1)^2 +\cdots + b_k Q_k^u (\log Q_k)^2 \geq b_j Q_j^u (\log Q_j)^2,
\end{align*}
which leads to \eqref{Inequality-SecondDerivE}.

 We next show \eqref{Inequality-lowerforsecond}. Let $S=b_1+\cdots+b_k$. Assume that $S<1$ and $J(q; r)\neq \emptyset$. Then, there exists $u'\in [s,t]$ such that 
 \begin{equation}\label{Inequality-lowerforsecond1}
 E(u')\in [1-r^{-\beta}, 1+r^{-\beta}] 
 \end{equation}
 by $J(q; r)\neq \emptyset$. Let $\delta$ be a sufficiently small positive real number depending only on $S$ such that $S(1+\delta)<1$. Let us show that $Q_j> (1+\delta)^{1/t}$ for some $j\in [k]$ by contradiction. Suppose that $Q_j\leq (1+\delta)^{1/t}$ for all $j\in [k]$. Then, we obtain
\[
E(u')\leq S(1+\delta)^{u'/t} \leq S(1+\delta)< 1-M_0^{-\beta}\leq 1-r^{-\beta},
\]
since $M_0$ is sufficiently large and $r\geq M\geq M_0$, a contradiction to \eqref{Inequality-lowerforsecond1}. Therefore, we have $Q_j >(1+\delta)^{1/t}$ for some $ j\in [k]$, and hence we conclude \eqref{Inequality-lowerforsecond}  by \eqref{Inequality-SecondDerivE}.  
\end{proof}

 By \eqref{equation-defE} and $\#\{\nu_1,\ldots, \nu_k\}>1$, we obtain 
\begin{gather*}
\lim_{u\to\infty} E(u;Q) =\lim_{u\to-\infty} E(u;Q) =\infty,\\
E''(u;Q) >0 \quad \text{for all $u\in \mathbb{R}$}.
\end{gather*}
Therefore, $E(u;Q)$ has the unique minimal point on $\mathbb{R}$. Let 
\[
m=m(Q)=\min_{u\in \mathbb{R}} E(u;Q), 
\]
and let $u_0=u_0(Q)\in \mathbb{R}$ be the point satisfying $m=E(u_0;Q)$. In Figure~\ref{Figure-Case2}, we plot the graph of $y=E(u;Q)$ when $E(0;Q)=b_1+\cdots + b_k<1$.

\begin{figure}[htbp]
\includegraphics[scale=0.05]{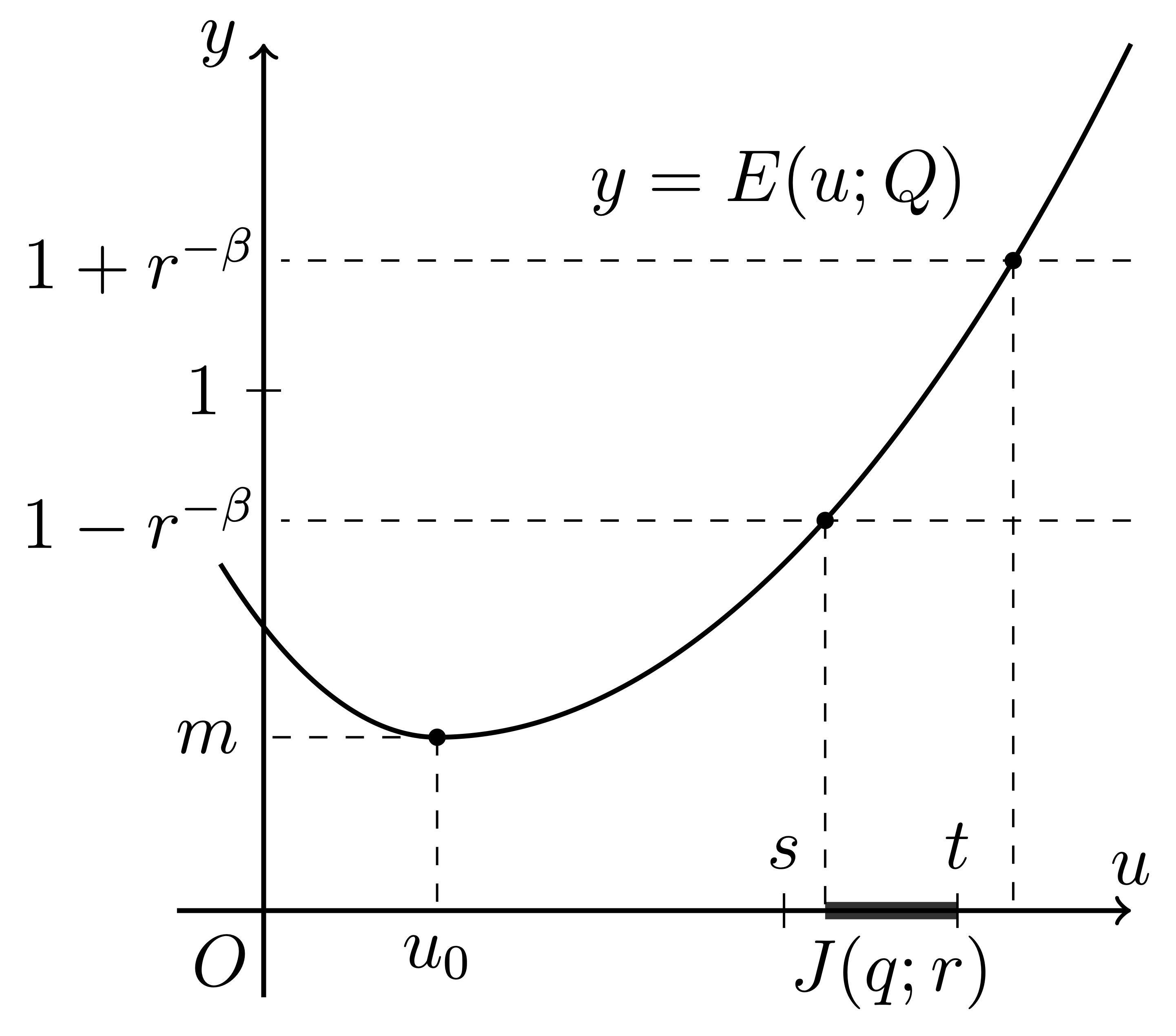}
\caption{Case~2 with $b_1+\cdots+b_k<1$}\label{Figure-Case2}
\end{figure}

\begin{lemma}\label{Lemma-smallSdiam}
If $b_1+\cdots+b_k<1$, then we have $\diam J(q; r) \ll r^{-\beta}$, where the implicit constant is absolute.
\end{lemma}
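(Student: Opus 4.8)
The plan is to bound $\diam J(q;r)$ from above by controlling the derivative $\rmE'(u;Q)$ from below on the relevant portion of $[s,t]$, using the convexity estimate \eqref{Inequality-lowerforsecond} of Lemma~\ref{Lemma-SecondDerivE}. The point is that $\rmE(\cdot\,;Q)$ is strictly convex with a unique minimum at $u_0=u_0(Q)$, and away from $u_0$ the first derivative grows because its derivative $\rmE''$ is bounded below by a positive constant. Concretely, for $u\ge u_0$ we have $\rmE'(u;Q)=\int_{u_0}^u \rmE''(v;Q)\,dv \gg u-u_0$, and similarly $-\rmE'(u;Q)\gg u_0-u$ for $u\le u_0$. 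So on the set where $\rmE(u;Q)\in[1-r^{-\beta},1+r^{-\beta}]$ the slope is already reasonably large unless $u$ is close to $u_0$, and near $u_0$ one instead uses that $\rmE(u;Q)-m \gg (u-u_0)^2$ by integrating \eqref{Inequality-lowerforsecond} twice.

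First I would split $J(q;r)$ according to whether it meets a neighbourhood of $u_0$. If $J(q;r)$ is non-empty, pick $\alpha^\ast\in J(q;r)$; then $|\rmE(\alpha^\ast;Q)-1|\le r^{-\beta}$, and since $\rmE''\gg 1$ we get $m \le \rmE(\alpha^\ast;Q)\le 1+r^{-\beta}$, so $m$ is bounded above by a constant. For any $\alpha\in J(q;r)$ we then have
\[
c(\alpha-u_0)^2 \le \rmE(\alpha;Q)-m \le (1+r^{-\beta})-m \le 2,
\]
for a constant $c>0$ coming from \eqref{Inequality-lowerforsecond}, which only shows $|\alpha-u_0|=O(1)$ — not yet enough. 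To get the $r^{-\beta}$ saving, write, for $\alpha_1<\alpha_2$ both in $J(q;r)$ (and, say, both $\ge u_0$; the case $\le u_0$ is symmetric, and the general case is handled by inserting $u_0$ if it lies between them and treating $[s,t]\cap[\alpha_1,u_0]$ and $[u_0,\alpha_2]$ separately — but note $\rmE$ is monotone on each piece of $[s,t]\setminus\{u_0\}$, so $J(q;r)$ restricted to each side is an interval),
\[
2r^{-\beta}\ge \rmE(\alpha_2;Q)-\rmE(\alpha_1;Q)=\int_{\alpha_1}^{\alpha_2}\rmE'(v;Q)\,dv \ge \rmE'(\alpha_1;Q)\,(\alpha_2-\alpha_1),
\]
using that $\rmE'$ is increasing. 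So $\diam(J(q;r)\cap[u_0,\infty)) \le 2r^{-\beta}/\rmE'(\alpha_1;Q)$ where $\alpha_1$ is the left endpoint. It remains to bound $\rmE'(\alpha_1;Q)$ below by a constant, i.e.\ to show $\alpha_1$ is bounded away from $u_0$ by a constant — equivalently that $u_0$ is bounded away from $[s,t]$, or that when $u_0$ is close to $[s,t]$ the interval is still short. Here is the clean way: since $\rmE(\alpha_1;Q)\ge 1-r^{-\beta}\ge m$, and $\rmE(u_0;Q)=m$, and more usefully $\rmE(\alpha_1;Q)\le 1+r^{-\beta}$ while for the \emph{other} endpoint the value is within $2r^{-\beta}$ — actually the sharpest route is: on $[u_0,\infty)$, $\rmE(\alpha;Q)=m+\int_{u_0}^{\alpha}\rmE'(v;Q)\,dv$, and combined with $\rmE'(v;Q)\ge c(v-u_0)$ this gives $\rmE(\alpha;Q)\ge m+\tfrac{c}{2}(\alpha-u_0)^2$. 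I will use this to locate the endpoints: every $\alpha\in J(q;r)\cap[u_0,\infty)$ satisfies $\tfrac{c}{2}(\alpha-u_0)^2 \le \rmE(\alpha;Q)-m$.

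The key remaining idea — and the main obstacle — is handling the regime where $u_0(Q)\in[s,t]$, because then $J(q;r)$ genuinely straddles the minimum and the naive first-derivative bound degenerates. The resolution is to bound $\diam J(q;r)$ directly from the quadratic lower bound: if $u_0\in[s,t]$ then $m\le \rmE(u_0;Q)$ and for $\alpha\in J(q;r)$, $\tfrac{c}{2}(\alpha-u_0)^2\le \rmE(\alpha;Q)-m \le (1+r^{-\beta})-m$. But one must also bound $m$ from \emph{below} by $1-O(r^{-\beta})$: indeed $J(q;r)$ non-empty forces some value of $\rmE$ to lie in $[1-r^{-\beta},1+r^{-\beta}]$, hence $m\le 1+r^{-\beta}$, and if additionally $u_0\in[s,t]$ then $m=\rmE(u_0;Q)$ is itself within $r^{-\beta}$ of $1$ only if $u_0\in J(q;r)$ — which need not hold. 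So instead: $m = \rmE(u_0;Q) \ge \rmE(\alpha_{\min};Q) - \tfrac12\,\rmE''(\xi;Q)\cdot(\dots)$ is the wrong direction; rather use that for the endpoints $\alpha_1,\alpha_2$ of $J(q;r)$ one has $\rmE(\alpha_i;Q)\in\{1-r^{-\beta},1+r^{-\beta}\}$ or $\alpha_i\in\{s,t\}$, so $(1+r^{-\beta})-m \le \max(\rmE(\alpha_1;Q),\rmE(\alpha_2;Q)) - m + 2r^{-\beta}$, and crucially $\max_i \rmE(\alpha_i;Q)-m \le \rmE'(\alpha_2;Q)\cdot\diam J \le \rmE''(\gamma\text{-bound})\cdot \diam J \cdot \diam J$ — i.e.\ $\rmE(\alpha;Q)-m \le C(\diam J)^2$ on all of $J(q;r)$ since $\rmE''$ is also bounded \emph{above} on $[s,t]$ (as $Q_j\le \max(1,B_j)$). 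Putting $D=\diam J(q;r)$: pick $\alpha$ at distance $\ge D/2$ from $u_0$ inside $J$ (possible unless $J$ doesn't straddle $u_0$, in which case the monotone argument above already gives $D\ll r^{-\beta}$); then $\tfrac{c}{2}(D/2)^2 \le \rmE(\alpha;Q)-m \le CD^2/? $ — this is circular, so the honest statement is $\tfrac{c}{2}(D/2)^2 \le \rmE(\alpha;Q)-m$, and separately $\rmE(\alpha;Q)\le 1+r^{-\beta}$ while $m \ge 1 - r^{-\beta} - CD^2$ is NOT available. The genuinely correct and simplest argument, which I expect the author uses, is: the two endpoints $\alpha_1<\alpha_2$ of each monotone piece of $J(q;r)$ satisfy $|\rmE(\alpha_2;Q)-\rmE(\alpha_1;Q)|\le 2r^{-\beta}$, hence by the mean value theorem $|\rmE'(\xi;Q)|\le 2r^{-\beta}/(\alpha_2-\alpha_1)$ for some $\xi$ between them; but also by the mean value theorem applied to $\rmE'$ and using \eqref{Inequality-lowerforsecond}, $|\rmE'(\alpha_2;Q)-\rmE'(\alpha_1;Q)|\ge c(\alpha_2-\alpha_1)$, so one of $|\rmE'(\alpha_i;Q)|\ge \tfrac{c}{2}(\alpha_2-\alpha_1)$, say $|\rmE'(\alpha_1;Q)|\ge \tfrac c2(\alpha_2-\alpha_1)$; then $2r^{-\beta}\ge |\rmE(\alpha_2;Q)-\rmE(\alpha_1;Q)| = |\int_{\alpha_1}^{\alpha_2}\rmE'| \ge$ (on a monotone piece, where $\rmE'$ has constant sign and is monotone) $\ge |\rmE'(\alpha_1;Q)|\cdot(\alpha_2-\alpha_1) \ge \tfrac c2 (\alpha_2-\alpha_1)^2$, giving $\alpha_2-\alpha_1 \ll r^{-\beta/2}$. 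That is only $r^{-\beta/2}$, not $r^{-\beta}$ — so to reach $r^{-\beta}$ one really does need that at least one endpoint of $J(q;r)$ has $|\rmE'|$ bounded below by a \emph{constant}, which holds because: if $u_0\notin[s-\epsilon_0,t+\epsilon_0]$ then $|\rmE'|\gg 1$ on all of $[s,t]$ by \eqref{Inequality-lowerforsecond} integrated; and if $u_0\in[s-\epsilon_0,t+\epsilon_0]$ one shows $J(q;r)\subseteq$ a fixed-length interval around $u_0$ on which, away from a $\sqrt{r^{-\beta}}$-neighbourhood, $|\rmE'|\gg 1$, and inside that tiny neighbourhood $\rmE$ stays within $r^{-\beta}$ of $m$ — wait, that is exactly the point: \emph{inside} an $r^{-\beta}$-length neighbourhood of $u_0$, $\rmE$ is within $O(r^{-2\beta})\subseteq O(r^{-\beta})$ of $m$, so IF $m\in[1-r^{-\beta},1+r^{-\beta}]$ then that whole tiny neighbourhood could be in $J$ — but its length is $r^{-\beta}$, fine — and if $m\notin$ that band then $J(q;r)$ avoids a neighbourhood of $u_0$ on which $|\rmE - m|$ is small, hence $|\rmE'|\gg1$ on $J(q;r)$ and the monotone bound gives $D\ll r^{-\beta}$. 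I would structure the proof along exactly this dichotomy, and I expect the fiddly case analysis around $u_0$ — showing that when $J(q;r)$ straddles $u_0$ the minimum value $m$ must itself be within $O(r^{-\beta})$ of $1$, so that $\tfrac c2 D^2 \le \max_{\alpha\in J}(\rmE(\alpha;Q)-m) \le 2r^{-\beta}$ only after one checks $J$ cannot extend far past where $\rmE$ re-exceeds $1+r^{-\beta}$ — to be the main obstacle; all the derivative integrals are routine given Lemma~\ref{Lemma-SecondDerivE}.
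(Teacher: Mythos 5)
There is a genuine gap: you never use the hypothesis $b_1+\cdots+b_k<1$ in the one place where it matters. Writing $S=b_1+\cdots+b_k$, the whole point of the paper's proof is that $m=\rmE(u_0;Q)\leq \rmE(0;Q)=S<1$, so every value of $\rmE$ attained on $J(q;r)$ satisfies $\rmE(\alpha;Q)-m\geq 1-r^{-\beta}-S\geq (1-S)/2\gg 1$, a \emph{constant} gap above the minimum. First, this forces $J(q;r)$ to lie entirely on the increasing branch $(u_0,\infty)$ (if $\alpha\leq u_0$ then $\rmE(\alpha)\leq\rmE(0)=S<1-r^{-\beta}$), so the ``straddling'' case you spend most of your effort on simply cannot occur. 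Second, combining $\rmE(\alpha;Q)-m\gg 1$ with the \emph{upper} bound $|\rmE'|\ll 1$ on $[u_0,\alpha]$ (which holds because every $Q_j\ll 1$) gives $|\alpha-u_0|\gg 1$ via the mean value theorem, and then $\rmE''\gg 1$ from \eqref{Inequality-lowerforsecond} gives $\rmE'(\alpha;Q)\gg|\alpha-u_0|\gg 1$ at the point produced by the mean value theorem, whence $\diam J\ll r^{-\beta}$. This chain is exactly what your proposal is missing.

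Without that input your final dichotomy is false. If $m$ lies in, or just below, the band $[1-r^{-\beta},1+r^{-\beta}]$, then $J(q;r)$ is \emph{not} contained in an $r^{-\beta}$-neighbourhood of $u_0$: the condition $\rmE(\alpha)\leq 1+r^{-\beta}$ is equivalent (via $c(\alpha-u_0)^2\leq\rmE(\alpha)-m\leq C(\alpha-u_0)^2$) to $|\alpha-u_0|\lesssim r^{-\beta/2}$, so $\diam J\asymp r^{-\beta/2}$ in that regime --- precisely the bound your own intermediate computation produced and correctly flagged as insufficient. Your claim that when $m$ is outside the band one gets $|\rmE'|\gg 1$ on all of $J$ also fails (take $m=1-r^{-\beta}-r^{-2\beta}$: then $J$ contains points at distance $\asymp r^{-\beta}$ from $u_0$, where $|\rmE'|\asymp r^{-\beta}$). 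The regime where $m$ is close to $1$ is exactly what the hypothesis $S<1$ excludes here, and it is why the paper needs the separate, much more elaborate treatment of Cases 3.3.1--3.3.3 in Section 6 (counting, via Lemma \ref{Lemma-atmostone}, how many $q$ can have $m$ near $1$) when no such hypothesis is available.
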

\begin{proof}
Let $S=b_1+\cdots+b_k<1$. We now observe that $J(q;r)\subseteq (u_0, \infty)$. Indeed, we take any $\alpha\in J(q;r)$. Then, we have $\alpha \in [s,t]$ and $E(\alpha)\in [1-r^{-\beta},1+r^{-\beta}]$. If $u_0\leq 0$, then it is clear that $\alpha\in (u_0,\infty)$. If $u_0>0$, then we show that $\alpha\in (u_0,\infty)$ by contradiction. Suppose that $\alpha\leq u_0$ is true. Then, we observe that
\begin{equation}\label{Inequality-smallS}
S=E(0)> E(\alpha)\geq 1-r^{-\beta}
\end{equation}
since $E(\cdot)$ is monotonically decreasing on $(0,u_0]$. By $1>S$ and $r\geq M_0$, the inequality \eqref{Inequality-smallS} leads to a contradiction since $M_0$ is sufficiently large. Therefore, we have $\alpha\in(u_0,\infty)$.

Since $E$ is monotonically increasing and continuously differentiable on $(u_0,\infty)$, the inverse function $L \colon (m,\infty)\to (u_0,\infty)$ of $E\mid_{(u_0,\infty)} $ exists. Here, $f\mid_A$ denotes the restricted function of $f$ on $A$. Since $J(q;r)\subseteq (u_0,\infty)$, we have
\begin{align*}
J(q; r)&=[s,t]\cap L ([1-r^{-\beta},1+r^{-\beta}]\cap (m,\infty)).
\end{align*}
If $J(q; r)$ is empty, then $\diam J(q; r)=0$. Thus, we may assume that  $J(q; r)$ is non-empty. In this case, by the mean value theorem, there exists $\theta\in [1-r^{-\beta},1+r^{-\beta}]\cap (m,\infty)$  such that $L(\theta)\in [s,t]$ and 
\[
\diam J(q; r) \leq 2r^{-\beta}L'(\theta)= 2 r^{-\beta} \frac{1}{E' (L(\theta) ) }. 
\]
If $u_0<0$, then $E'(0)>0$. In this case, by the mean value theorem and Lemma~\ref{Lemma-SecondDerivE}, there exists $\theta'$ in between $0$ and $L(\theta)$ such that
\[
E'(L(\theta))\geq E'(L(\theta))- E' (0)= |L(\theta)||E''(\theta') |\geq s|E''(\theta') |\gg 1 .
\]
Therefore, we conclude that $\diam J(q; r)\ll r^{-\beta}$ if $u_0<0$.

Suppose that $u_0\geq 0$. Then, by $E'(u_0)=0$ and Lemma~\ref{Lemma-SecondDerivE}, there exists $\tilde{\theta}\in (u_0, L(\theta))$ such that
\begin{equation}\label{Inequality-smallS1}
E'(L(\theta))= E'(L(\theta)) -E'(u_0) = (L(\theta)-u_0)E''(\tilde{\theta}) \gg |L(\theta)-u_0|.
\end{equation}
Further, by the mean value theorem, we find $\eta\in (u_0, L(\theta))$ such that
\begin{equation}\label{Inequality-smallS2}
E(L(\theta))-E(u_0)=|L(\theta)-u_0| E'(\eta).
\end{equation}
Recalling that $S<1$, $\theta\geq 1-r^{-\beta}$, and $m=E(u_0)=\min E(u)$, we have 
 \begin{equation}\label{Inequality-smallS3}
 E(L(\theta))-E(u_0)=\theta -m\geq 1-r^{-\beta}-S \geq (1-S)/2\gg 1
 \end{equation}
since $r\geq M\geq M_0$ and $M_0$ is sufficiently large. Thus, by combining \eqref{Inequality-smallS1}, \eqref{Inequality-smallS2}, and \eqref{Inequality-smallS3}, we have 
\[
E'(L(\theta))\gg |L(\theta)-u_0|=\frac{E(L(\theta))-E(u_0)}{E'(\eta)} \gg \frac{1}{E'(\eta)},
\]
where $E'(\eta)>0$ by $\eta>u_0$. We also recall that  $Q_j< B_j$ and $B_j=\max(b_j^{-1/\beta},\ b_j^{-1/\gamma} )$ for all $j\in [k]$, which implies that  
\[ 
E'(\eta)= b_1 Q_1^{\eta} \log Q_1+ \cdots + b_k Q_k^{\eta}\log Q_k    \leq k \max_{j\in [k]} (B_j^\eta \log B_j).
\]
Therefore, we obtain $E'(L(\theta))\gg 1$, and hence we conclude the lemma.
\end{proof}

\section{Proof of Theorem~{\ref{Theorem-genMain1}}}
Let $a_1,\ldots, a_n$ be positive real numbers. Let $n+1<\beta<s<t$. Suppose that $a_i\geq 1$ for all $i\in [k]$. Then, by Lemma~\ref{Lemma-AtoB2}, it suffices to show that 
\begin{equation}\label{Inequality-Desired01}
\Haus \mathcal{B}_{b_1,\ldots, b_k}(s,t)\leq (k+1)/s
\end{equation}
 for all $k\in [n]$ and $b_1,\ldots, b_k\geq 1$. Fix such $k\in [n]$ and $b_1,\ldots, b_k$. Let $\sigma\in ((k+1)/\beta,1)$. By \eqref{Inequality-covering1}, we obtain 
\[
\mathcal{H}^\sigma_\infty (\mathcal{B}_{b_1,\ldots, b_k}(s,t)) \ll \sum_{J\in \mathcal{C}_1(M)}  \left(\diam J\right)^\sigma
\]
which converges by Lemma~\ref{Lemma-BasicCase1} and $\sigma> (k+1)/\beta$.  Taking $M\to \infty$, we have
$\mathcal{H}^\sigma_\infty (\mathcal{B}(s,t))=0$, and hence $\Haus \mathcal{B}(s,t)\leq \sigma$. By choosing $\sigma\to (k+1)/\beta$ and $\beta\to s$, we conclude \eqref{Inequality-Desired01}. 

Suppose that $a_1+\cdots+a_n < 1$. Then, recalling Lemma~\ref{Lemma-AtoB3},  it suffices to show \eqref{Inequality-Desired01} for all $k\in [n]$ and $b_1,\ldots, b_k>0$ with $b_1+\cdots +b_k<1$. Fix arbitrary $k\in[n]$ and such $b_1,\ldots, b_k$. By \eqref{Inclusion-cases}, we have
\begin{align*}
&\mathcal{H}^\sigma_\infty (\mathcal{B}_{b_1,\ldots, b_k}(s,t))\\
&\leq \sum_{J\in \calC_0(M)} (\diam J)^\sigma + \sum_{J\in \calC_1(M)} (\diam J)^\sigma +  \sum_{\substack{\nu\in \{0,1\}^k \\ \#\{\nu_1,\ldots, \nu_k\}>1 }} \sum_{J\in \calC^\nu(M)} (\diam J)^\sigma  
\end{align*}
The first and second series converge by \eqref{Series-Case1Case2} and $\sigma>(k+1)/\beta$. Moreover, by Lemma~\ref{Lemma-smallSdiam}, the third series is 
\[
\ll_\sigma \sum_{\substack{\nu\in \{0,1\}^k \\ \#\{\nu_1,\ldots, \nu_k\}>1 }} \sum_{r\geq M} \sum_{q\in I^\nu} r^{-\sigma\beta}\ll \sum_{r\geq M} r^{-\beta \sigma + k}
\]
which also converges. Therefore, by taking $M\to \infty$, we have
$\mathcal{H}^\sigma_\infty (\mathcal{B}(s,t))=0$. Hence, $\Haus \mathcal{B}(s,t)\leq \sigma$.  By choosing $\sigma\to (k+1)/\beta$ and $\beta\to s$, we obtain \eqref{Inequality-Desired01}. Furthermore, $\Haus \mathcal{B}(s,t)<1$ implies $\mathcal{L}( \mathcal{B}(s,t))=0$ by \eqref{H3}. Therefore, we conclude Theorem~\ref{Theorem-genMain1}.

\section{Other cases}\label{Section-nonsimple}

Suppose that $b_1,\ldots, b_k$ are real numbers with $b_i >0$ for all $i\in [k]$.  Fix arbitrary $1<\beta <s<t<\gamma$. By recalling \eqref{Inclusion-cases}, we should estimate upper bounds for the diameter of $J$ for all $J\in  \mathcal{C}^{\nu} (M)$ and $\nu \in \{0,1\}^k$ with $\#\{\nu_1,\ldots, \nu_k\}>1$. Without loss of generality, we may assume that $\nu_k=1$ by renumbering.

Suppose that $M\geq M_0$ and $r\geq M$.  Then, $I^{(\nu_k)}_k= I^{(1)}_k= (r,B_k r)$ and $B_k>1$. If $P=P(q,r)$ is a condition depending on  $q$ and $r$, then we define 
\begin{equation*}
\mathcal{C}^\nu( M;P)= \{J(q; r)\colon r\geq M,\  q\in I^\nu,\ \text{$P(q,r)$ is true} \}.
\end{equation*}
  Let $u_0=u_0(\cdot)$  be as in Section~\ref{Section-Case3smallS}. We consider the following three cases:  
\begin{itemize}
\item (Case~2.0)  $u_0(q/r)\leq \beta$;  
\item (Case~2.1)  $u_0(q/r)\geq \gamma$; 
\item (Case~2.2)  $u_0(q/r)\in (\beta,\gamma)$.
\end{itemize}
In Figures~\ref{Figure-Case20}, \ref{Figure-Case21}, and \ref{Figure-Case22}, we plot the graph of $y=E(u;Q)(=E(u;q/r))$ in each case.

\begin{figure}[htbp]
\centering
\begin{minipage}[b]{0.32\columnwidth}
    \centering
    \includegraphics[width=0.9\columnwidth]{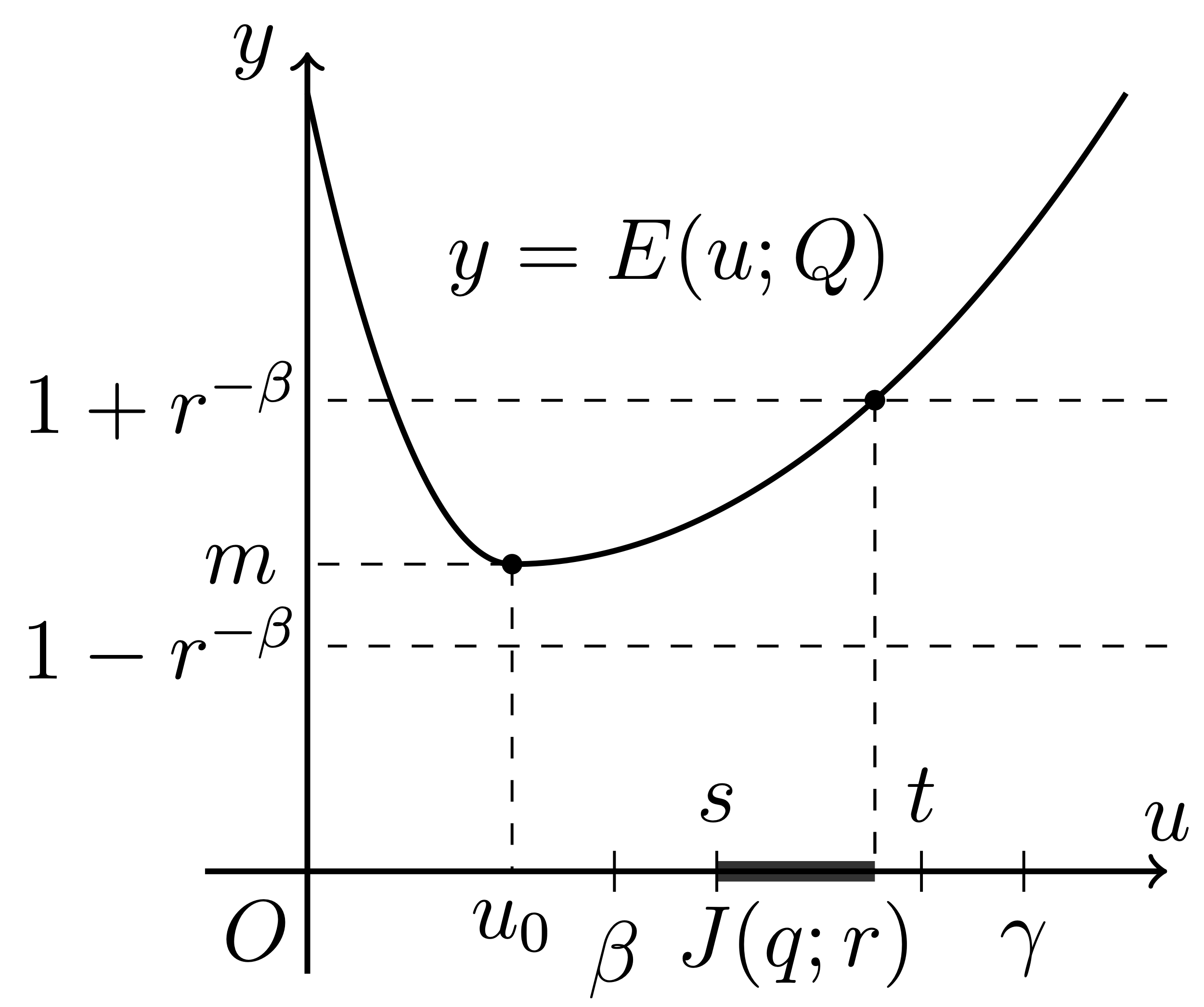}
    \caption{Case~2.0}
    \label{Figure-Case20}
\end{minipage}
\begin{minipage}[b]{0.32\columnwidth}
    \centering
    \includegraphics[width=0.9\columnwidth]{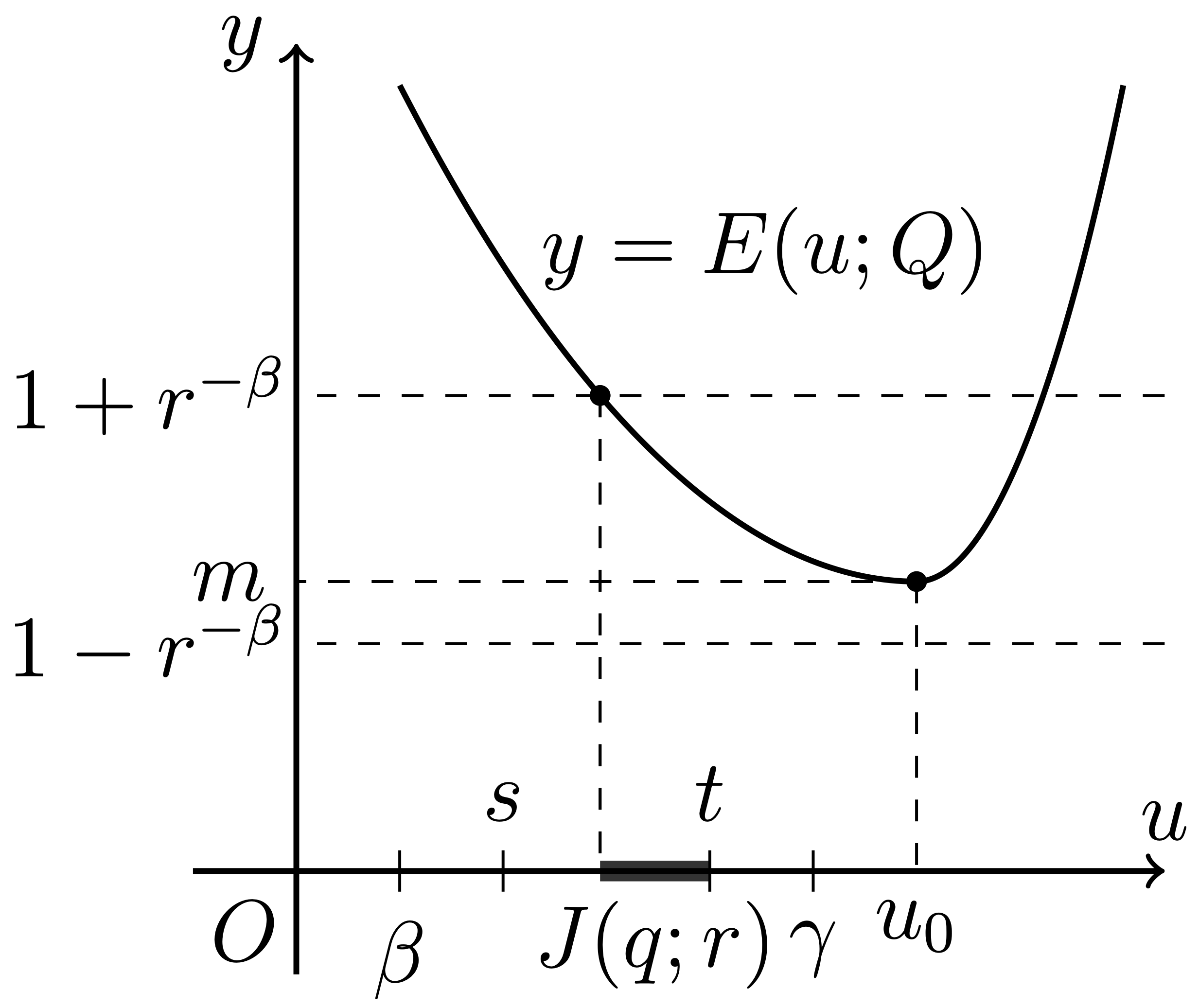}
    \caption{Case~2.1}
    \label{Figure-Case21}
\end{minipage}
\begin{minipage}[b]{0.32\columnwidth}
    \centering
    \includegraphics[width=0.9\columnwidth]{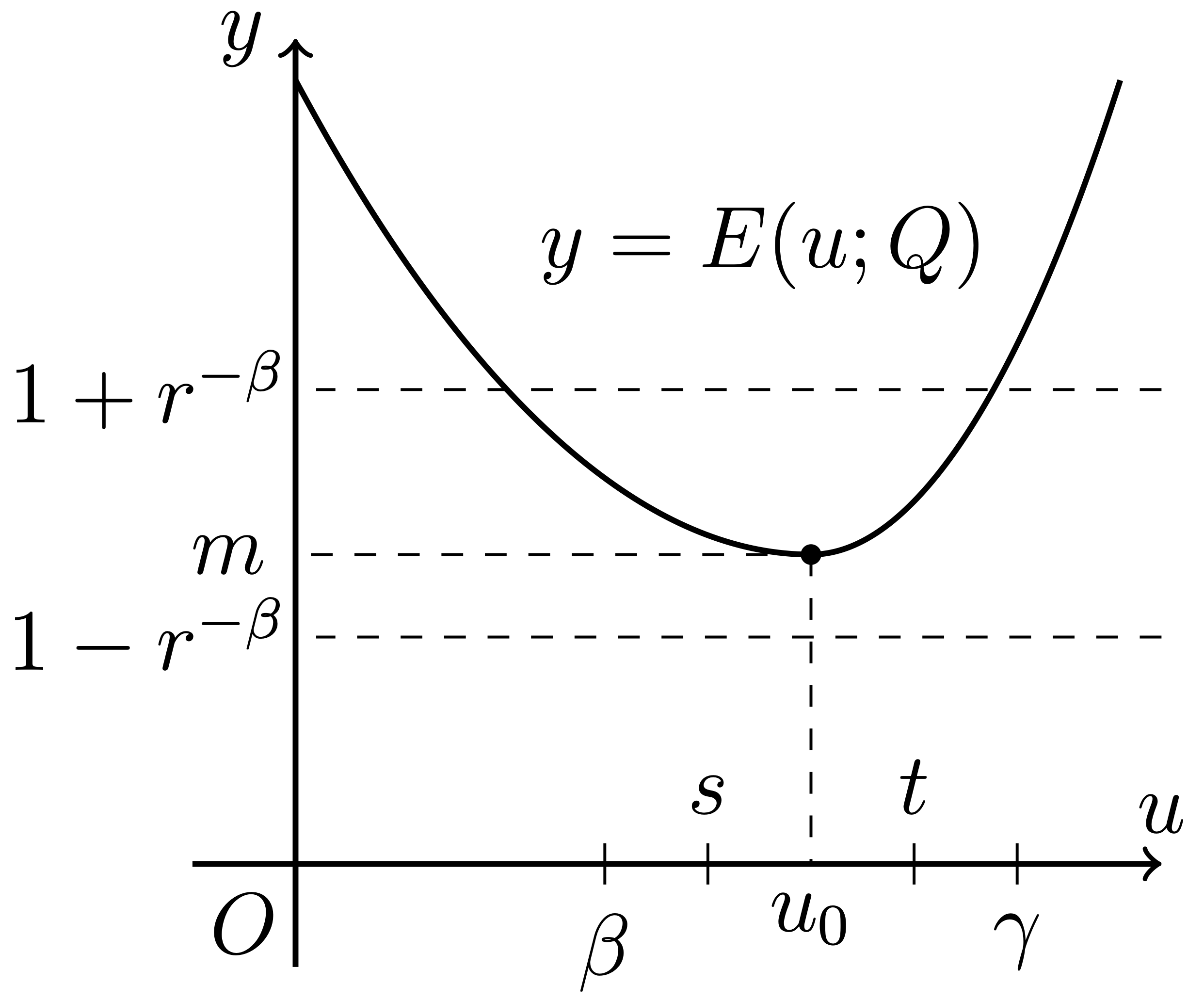}
    \caption{Case~2.2}
    \label{Figure-Case22}
\end{minipage}
\end{figure}
We decompose $C^\nu(M)$ into $\calC^\nu(M)=\calC_{20}^\nu(M)\cup \calC_{21}^\nu(M)\cup \calC_{22}^\nu(M)$, where 
\begin{align*}
\calC_{20}^\nu(M)&=\calC^\nu (M;u_0(q/r)\leq \beta),\\
\calC_{21}^\nu(M)&=\calC^\nu (M;u_0(q/r)\geq \gamma),\\  
\calC_{22}^\nu(M)&=\calC^\nu (M; u_0(q/r)\in (\beta,\gamma)). 
\end{align*}
\begin{lemma}[Case~2.0]\label{Lemma-series31}Let $\sigma> \max ((k+1)/\beta,\ k/(\beta-2) )$.   We have
\begin{align*}
\sum_{J\in \calC_{20}^\nu (M)} (\diam J)^\sigma <\infty .
\end{align*}
\end{lemma}
\begin{proof} Let $q=(q_1,\ldots, q_k)\in I^\nu$ and we suppose that $u_0=u_0 (q/r)\leq \beta$. Then, the function $E(u)=E(u;q/r)$ is monotonically increasing and continuously differentiable on $[s,t]$. Therefore, the inverse function of $E|_{[s,t]}$ 
\[
L\colon [E(s), E(t)]\ni E(u) \longmapsto u\in [s,t] 
\]
exists (See Figure~\ref{Figure-Case20}). The function $L(u)$ is also monotonically increasing and continuously differentiable on $[E(s), E(t)]$. Then by setting $A=[E(s), E(t)]\cap [1-r^{-\beta},1+r^{-\beta}]$, we obtain $J(q; r)=L (A)$. By the mean value theorem, there exists $\theta\in A$ such that
\[
\diam L(A)\leq 2r^{-\beta} L'(\theta) =2r^{-\beta}\frac{1}{E'(L(\theta)) }.
\]
By combining $E'(\beta)\ge 0$, the mean value theorem, and Lemma~\ref{Lemma-SecondDerivE}, there exists $\theta'\in (\beta,L(\theta))$ such that
\[
E'(L(\theta))\geq E'(L(\theta))-E'(\beta)=(L(\theta)-\beta) E'' (\theta')\gg (\log (r/q_k))^2,
\]
where we apply $L(\theta)\geq s$ and $s>\beta$ at the last inequality. Therefore, we obtain $\diam J(q; r)\ll r^{-\beta}(\log (r/q_k))^{-2}$, which implies that
\[
\sum_{J\in \calC_{20}^\nu (M)} (\diam J)^\sigma \ll_\sigma  \sum_{r\geq M} \sum_{q\in I^\nu} r^{-\sigma\beta}\log^{-2\sigma} (r/q_k).
\]
Lemma~\ref{Lemma-logevalution} yields that the right-hand side is 
\[
\ll_\sigma \sum_{r\geq M} r^{-\sigma\beta+k-1} (r +r^{2\sigma} )\log r= \sum_{r\geq M}( r^{-\sigma\beta +k} + r^{-\sigma(\beta-2) +k-1} )\log r
\]
which converges if $-\sigma\beta+k<-1$ and $-\sigma(\beta-2) +k-1<-1$, that is,
\[
\sigma >\max \left( (k+1)/\beta,\  k/(\beta-2)\right). \qedhere
\]
\end{proof}

\begin{lemma}[Case~2.1]\label{Lemma-series32}
 Let $\sigma> \max ((k+1)/\beta,\ k/(\beta-2) )$. We have
\begin{align*}
\sum_{J\in \calC_{21}^\nu (M)} (\diam J)^\sigma <\infty .
\end{align*}
\end{lemma}
\begin{proof}We obtain this lemma in a similar manner to the proof of Lemma~\ref{Lemma-series31}. 
\end{proof}

For Case~2.2, we prepare the following lemma.  
\begin{lemma}\label{Lemma-atmostone}Let $u_0(\cdot)$ and $m(\cdot)$ be as in Section~\ref{Section-Case3smallS}. For all $q=(q_1,\ldots, q_{k-1})\in I^{(\nu_1,\ldots, \nu_{k-1})}$, and $p,p'\in  I_{k}^{(\nu_k)}$ with $p'<p$, if we have   
\begin{align} \label{Elements-atmostone}
u_0(Q_1,\ldots, Q_{k-1}, p/r)&\in (\beta,\gamma),\\ \label{Elements-atmostone2}
u_0(Q_1,\ldots, Q_{k-1}, p'/r)&\in (\beta,\gamma),
\end{align}
then 
\begin{equation}\label{Inequality-m}
m(Q_1,\ldots,Q_{k-1},p'/r)-m(Q_1,\ldots ,Q_{k-1},p/r)\geq b_k/r. 
\end{equation}
In particular, for all $q=(q_1,\ldots, q_{k-1})\in I^{(\nu_1,\ldots, \nu_{k-1})}$ and intervals $H\subseteq \mathbb{R}$,  if 
\[
\diam H< b_k/r,
\]
then at most one $p\in I_{k}^{(\nu_k)}$ with \eqref{Elements-atmostone} satisfies 
$
m(Q_1,\ldots,Q_{k-1}, p/r)\in H
$.
\end{lemma}

\begin{proof}Take arbitrary $p,p'\in I_k^{(\nu_k)}$ with $p'<p$. Suppose that \eqref{Elements-atmostone} and \eqref{Elements-atmostone2} are true. Let $u_0$ and $u_0'$ be the left-hand side of \eqref{Elements-atmostone} and \eqref{Elements-atmostone2}, respectively. Since $E(u_0'; Q_1,\ldots, Q_{k-1},p'/r)$ is the least value of $E(\:\cdot\:; Q_1,\ldots, Q_{k-1},p'/r)$, we see that
\begin{align*}
&m(Q_1,\ldots,Q_{k-1},p/r)-m(Q_1,\ldots ,Q_{k-1},p'/r) \\
&= E(u_0;Q_1,\ldots, Q_{k-1}, p/r ) - E(u_0' ;Q_1,\ldots, Q_{k-1}, p'/r)\\
&\geq E(u_0 ;Q_1,\ldots, Q_{k-1}, p/r )- E(u_0 ;Q_1,\ldots, Q_{k-1}, p'/r)\\
&=b_k(p/r)^{u_0}-b_k  (p'/r)^{u_0}\\
& = u_0 b_k\theta^{u_0-1} (p-p')/r 
\end{align*}
for some $\theta\in (p'/r,p/r)$. Recall that $\nu_k=1$ and $I_k^{(1)}=(r,B_k r)_\mathbb{Z}$, and hence $\theta>p'/r >1$. 
Thus, by $u_0>\beta>1$, we have  
\[
u_0 b_k  \theta^{u_0-1}(p-p')/r \geq b_k/r 
\]
which leads to \eqref{Inequality-m}. 

In particular, by \eqref{Inequality-m}, it is clear that if $\diam H< b_k/r$, then at most one $p\in I_{k}^{(\nu_k)}$ with \eqref{Elements-atmostone} satisfies 
$
m(Q_1,\ldots,Q_{k-1}, p/r)\in H
$.
\end{proof}

Suppose that $u_0\in (\beta,\gamma)$. The function $E(u; Q)$ is decreasing and continuously differentiable on $(-\infty,u_0]$, and hence the inverse function of $E|_{(-\infty,u_0]} $
\[
L_0(\ \cdot\ )=L_0(\ \cdot\ ; Q):[m, \infty)\ni E(u) \longmapsto u\in (-\infty, u_0]
\]
exists (See Figure~\ref{Figure-Case22}). Moreover, the function $E(u; Q)$ is increasing and continuously differentiable on $[u_0,\infty)$. Thus, the inverse function of $E|_{[u_0,\infty)}$
\[
L_1(\ \cdot\ )=L_1(\ \cdot\ ; Q):[m, \infty) \ni E(u) \longmapsto u\in [ u_0, \infty)
\]
exists. The functions $L_0$ and $L_1$ are monotonic and continuously differentiable. Suppose that $J=J(q; r)\in \mathcal{C}_{22}(M)$ and $X$ is a real parameter in $ (2/r^{\beta}, b_k/(2r))$ depending only on $r$, where we will choose $X(r)=r^{2(1-\beta)/3}$ later. Let us consider the following cases:
\begin{itemize}
\item (Case~2.2.0)  $m\in [1-X,1+r^{-\beta} ]$; 
\item (Case~2.2.1)  $m>1+r^{-\beta} $; 
\item (Case~2.2.2)  $m<1-X$.
\end{itemize}
In Figures~\ref{Figure-Case220}, \ref{Figure-Case221}, and \ref{Figure-Case222}, we plot the graph of $y=E(u;Q)$ in each case.

\begin{figure}[htbp]
\centering
\begin{minipage}[b]{0.32\columnwidth}
\centering
\includegraphics[width=0.9\columnwidth]{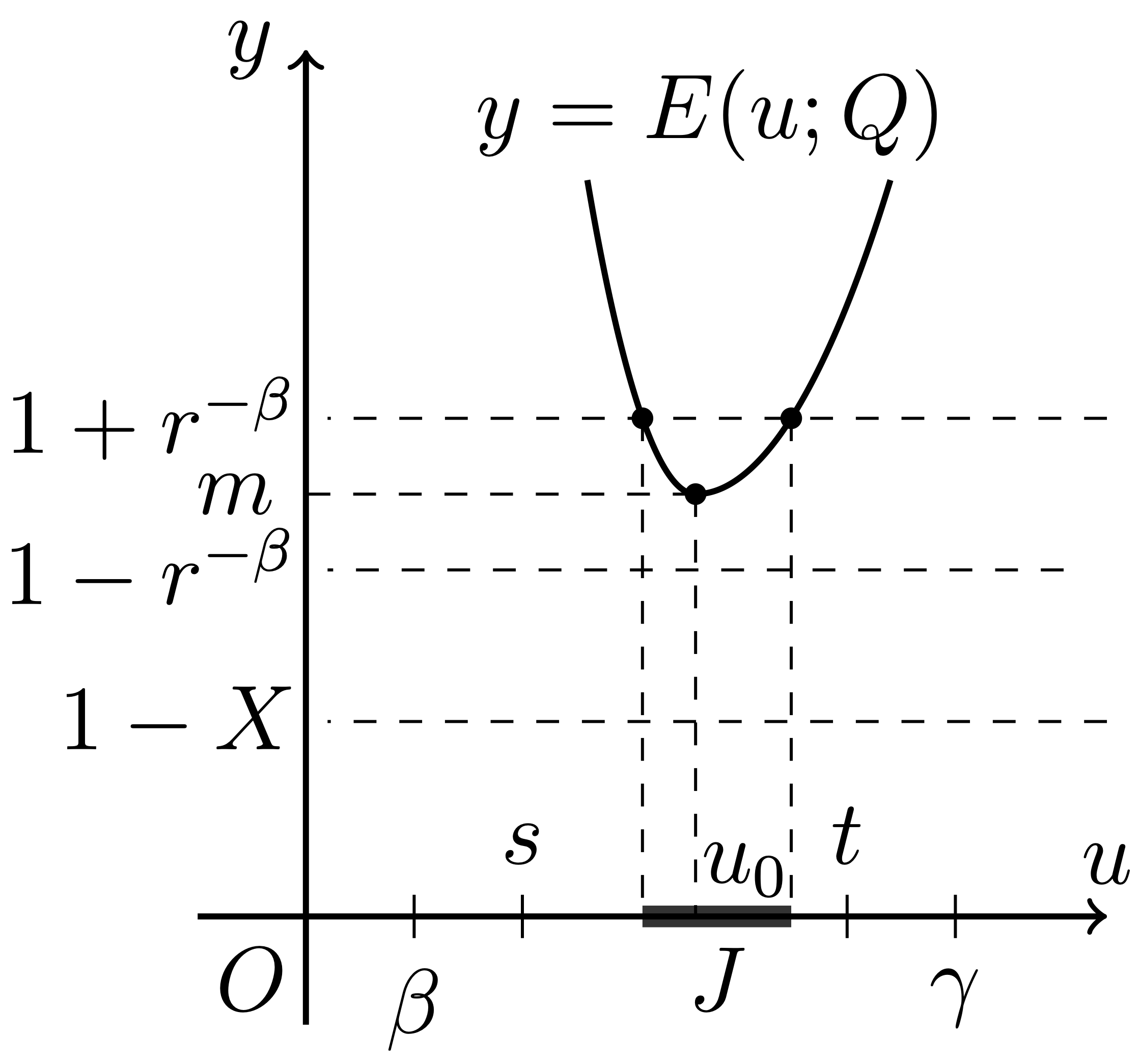}
\caption{Case~2.2.0}
\label{Figure-Case220}
\end{minipage}
\begin{minipage}[b]{0.32\columnwidth}
\centering
\includegraphics[width=0.9\columnwidth]{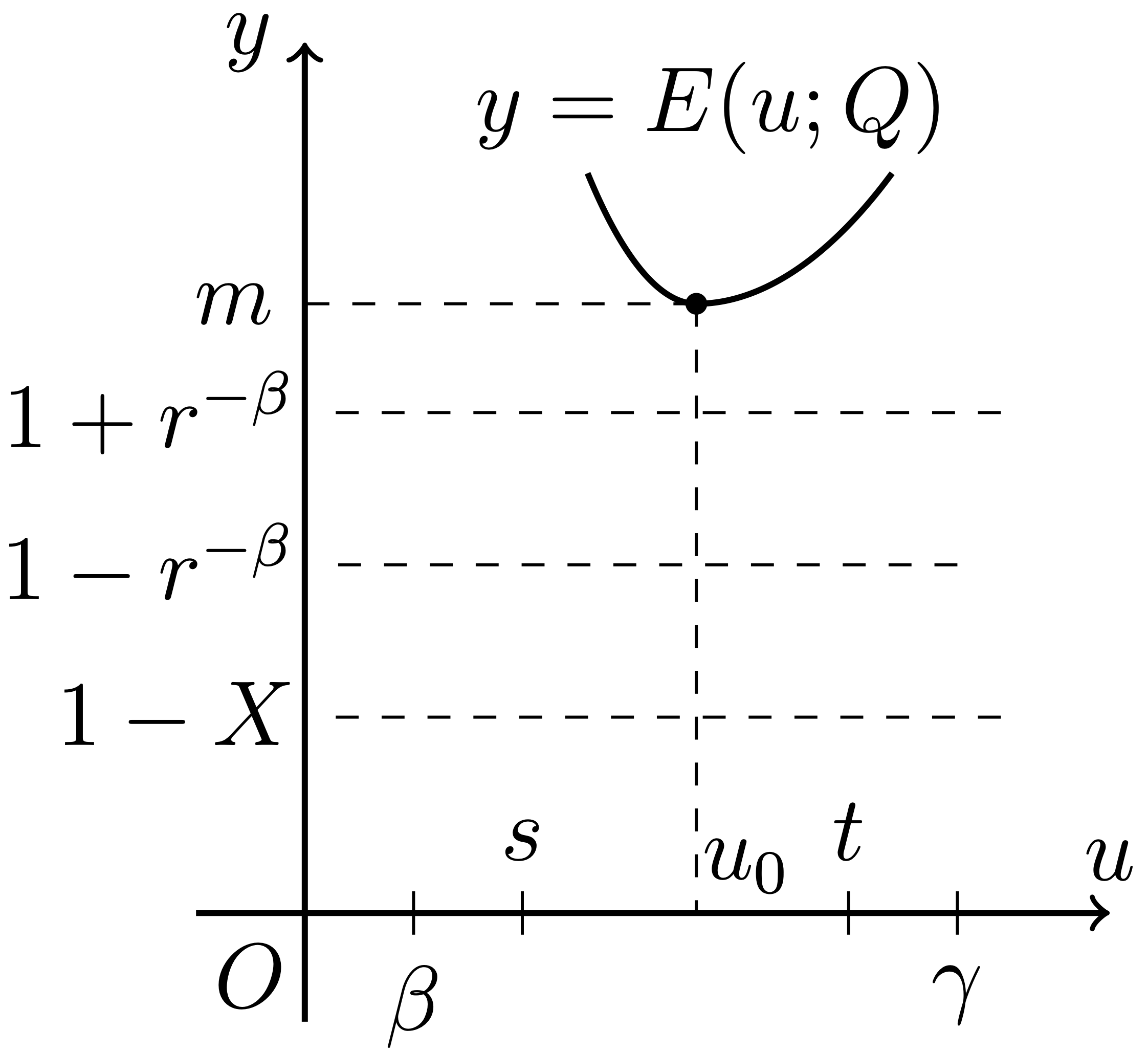}
\caption{Case~2.2.1}
\label{Figure-Case221}
\end{minipage}
\begin{minipage}[b]{0.32\columnwidth}
\centering
\includegraphics[width=0.9\columnwidth]{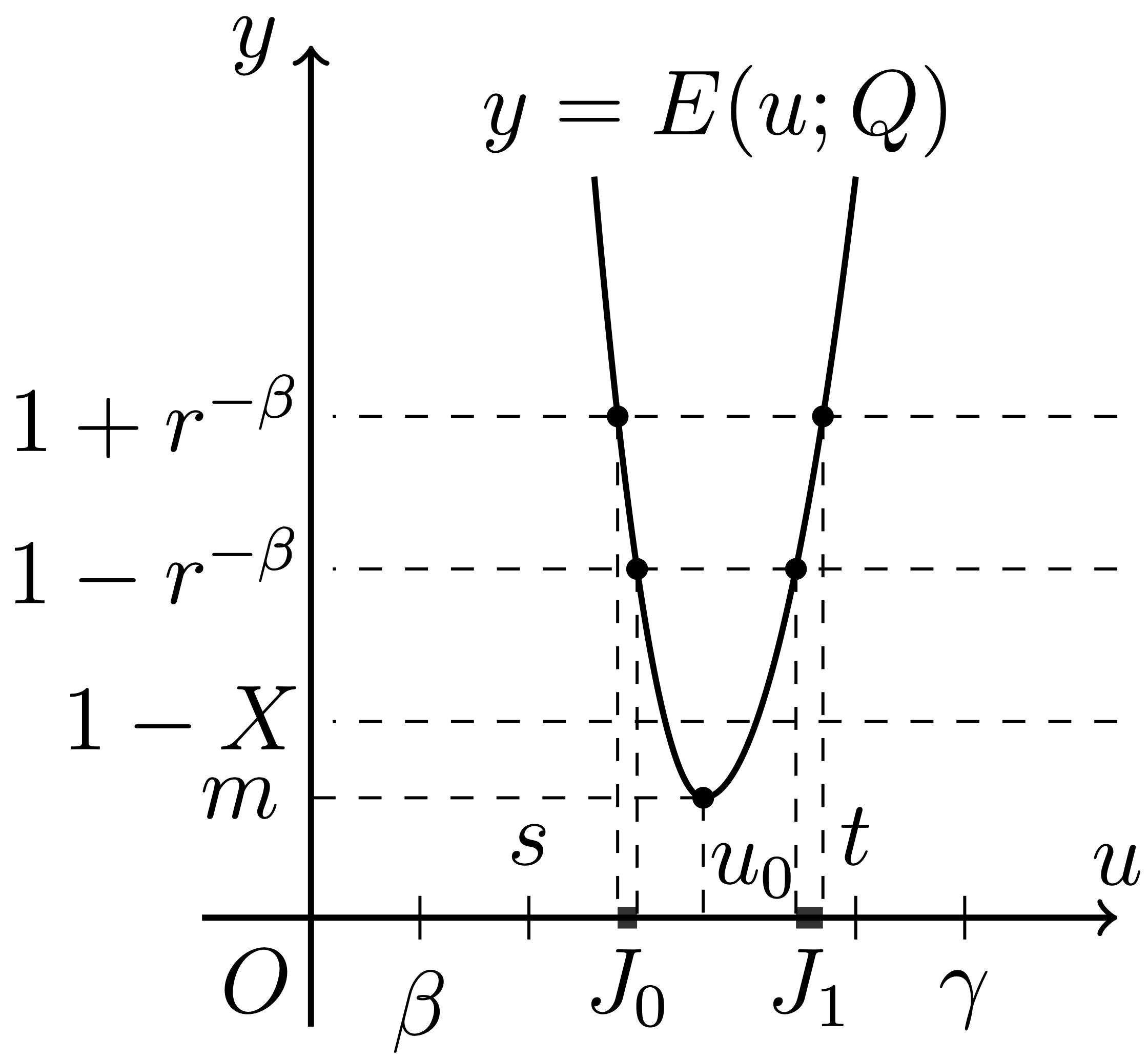}
\caption{Case~2.2.2}
\label{Figure-Case222}
\end{minipage}
\end{figure}

We decompose $\calC_{22}^\nu(M)=\calC_{220}^\nu(M)\cup \calC_{221}^\nu(M)\cup \calC_{222}^\nu(M)$, where 
\begin{align*}
\calC_{220}^\nu (M)&=\calC_{22}^\nu(M; m\in [1-X,1+r^{-\beta}]),\\
\calC_{221}^\nu (M)&=\calC_{22}^\nu(M; m>1+r^{-\beta}),\\
\calC_{222}^\nu (M)&=\calC_{22}^\nu(M; m<1-X).
\end{align*}

\begin{lemma}[Case 2.2.0] \label{Lemma-Case220}
For all $J=J(q;r)\in \calC_{220}^\nu(M)$, we have 
\[
\diam J \ll rX^{1/2},
\]
where the implicit constant is absolute.
\end{lemma}
\begin{proof} Suppose that $J=J(q;r)\in \calC_{220}^\nu(M)$. Then it follows that 
\begin{align} \nonumber
\diam J&=\diam \biggl(E^{-1} ([1-r^{-\beta}, 1+r^{-\beta}])\cap [s,t]\biggr) \\ \nonumber
&\leq \diam \biggl(E^{-1} ([0, 1+r^{-\beta}])\biggr)\\ \nonumber
&\leq L_1(1+r^{-\beta}) -    L_0(1+r^{-\beta} )\\ \label{Inequality-L1L2}
&\leq |L_1(1+r^{-\beta}) - u_0| + |L_0(1+r^{-\beta})-u_0|
\end{align}
Let $\eta_i =L_i(1+r^{-\beta})>0$ for every $i\in \{1,2\}$.  Then by the Taylor expansion and Lemma~\ref{Lemma-SecondDerivE}, there exists a real number $\theta$ in between $\eta_i$ and $u_0$ such that
\begin{align*}
E(\eta_i) &=E(u_0) + (\eta_i -u_0)E'(u_0 ) +  (\eta_i -u_0)^2  E''(\theta ) \\
&=E(u_0)+ (\eta_i -u_0)^2  E''(\theta)\geq E(u_0)+K(\eta_i -u_0)^2 (\log (q_k/r))^2
\end{align*}
for some constant $K>0$. Further, by the condition of Case 2.2.0, we observe that 
\[
E(\eta_i) - E(u_0) = |1+r^{-\beta} -m|\leq X+r^{-\beta},  
\]
which implies that
\begin{equation}\label{Inequality-etau0}
|\eta_i -u_0|^2 \ll \frac{X+r^{-\beta}}{(\log (q_k/r))^2 } \ \ll X\log^{-2} (1+1/r)\ll r^2X,
\end{equation}
where $X\geq 2r^{-\beta}$ and $q_k>r$ lead to the second inequality. Therefore, by combining \eqref{Inequality-L1L2} and \eqref{Inequality-etau0}, we conclude that $\diam J \ll rX^{1/2}$.
\end{proof}

\begin{lemma}[Case 2.2.1]\label{Lemma-Case332}
For all $J\in \calC_{221}^\nu(M)$, we have $\diam J=0$.
\end{lemma}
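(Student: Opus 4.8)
The plan is to exploit the defining condition of Case~3.3.2, namely that the minimum value $m = m(Q)$ of $\rmE(\ \cdot\ ; Q)$ satisfies $m > 1 + r^{-\beta}$. Since $m$ is by definition the least value attained by $\rmE(u;Q)$ over all $u\in\mathbb{R}$, we have $\rmE(u;Q) \geq m > 1 + r^{-\beta}$ for every $u$, and in particular for every $u\in[s,t]$. But $J(q;r)$ is, by definition, the set of $\alpha\in[s,t]$ with $\rmE(\alpha;Q)\in[1-r^{-\beta}, 1+r^{-\beta}]$; the strict inequality $\rmE(\alpha;Q) > 1+r^{-\beta}$ then shows no such $\alpha$ exists, so $J(q;r)=\emptyset$ and hence $\diam J(q;r)=0$.

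The only step requiring any care is making sure the logical chain is airtight: that $m$ really is a global minimum (this was established in Subsection~\ref{Section-Case3smallS}, where it was shown that in Case~3 the function $\rmE(u;Q)$ tends to $+\infty$ in both directions and is strictly convex, hence has a unique global minimum $m = \rmE(u_0;Q)$), and that the interval defining $J(q;r)$ is as claimed. Both are immediate from the setup, so there is essentially no obstacle here — this is the short, easy case, in contrast to Case~3.3.1 (where the minimum sits inside or just below the target window and one must bound how wide the preimage can be) and Case~3.3.3 (where $m$ is far below the window and one must count contributions using Lemma~\ref{Lemma-atmostone}).

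Concretely, I would write: suppose $J = J(q;r)\in\calC_{332}(M)$, so that $m = m(Q) > 1 + r^{-\beta}$. For any $\alpha\in[s,t]$ we have $\rmE(\alpha;Q)\geq m > 1 + r^{-\beta}$, since $m$ is the minimum of $\rmE(\ \cdot\ ;Q)$ on $\mathbb{R}$. Therefore $\rmE(\alpha;Q)\notin[1-r^{-\beta},1+r^{-\beta}]$ for every $\alpha\in[s,t]$, which means $J(q;r) = \{\alpha\in[s,t] : \rmE(\alpha;Q)\in[1-r^{-\beta},1+r^{-\beta}]\} = \emptyset$. Consequently $\diam J(q;r) = 0$, as claimed.
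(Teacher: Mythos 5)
Your proposal is correct and follows exactly the same argument as the paper: since $m(Q)>1+r^{-\beta}$ is the global minimum of $\rmE(\,\cdot\,;Q)$, the set $J(q;r)$ is empty and therefore has diameter zero. No further elaboration is needed.
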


\begin{proof}
Suppose that $J=J(q;r)\in \calC_{221}^\nu(M)$. Then, it is clear that $J$ is empty since $E(u;q/r)\geq m(q/r)>1+r^{-\beta}$ for all $u\in \mathbb{R}$ and $J(q;r)\subseteq E^{-1}([1-r^{-\beta}, 1+r^\beta])$.
\end{proof}

Before discussing Case~2.2.2, for all $J=J(q;r)\in \calC_{222}^\nu (M)$ and $i\in\{0,1\}$, we define 
\begin{equation}\label{definition-J}
J_i =J_i(q;r)= L_i\biggl([1-r^{-\beta}, 1+r^{-\beta}]\cap[m, \infty ) ; q/r\biggr)\cap [s,t]. 
\end{equation}
\begin{lemma}[Case~2.2.2]\label{Lemma-Case333}
For all $J=J(q;r)\in \calC_{222}^\nu (M)$ and $i\in \{0,1\}$,
\begin{itemize}
\item if $m\in [1-b_k/(2r), 1-X]$, then 
\begin{equation}\label{Inequality-case333main1}
\diam J_i\ll  r^{-\beta}(\log r) X^{-1} \log^{-2} (q_k/r),
\end{equation}
where the implicit constant is absolute\textup{;}
\item if $m= [1-(\ell+1)b_k/(2r), 1- \ell b_k/(2r)]$ for some $1\leq \ell\leq 2r/b_k$, then 
\begin{equation}\label{Inequality-case333main2}
\diam J_i\ll  r^{-\beta+1}(\log r) \ell^{-1} \log^{-2} (q_k/r),
\end{equation}
where the implicit constant is absolute.
\end{itemize}

\end{lemma} 
\begin{proof} Suppose that $J=J(q;r)\in \calC_{222}(M)$. Then, we have $m=m(q/r)<1-X$ and
\begin{align*}
J=J(q;r)= E^{-1} ([1-r^{-\beta}, 1+r^{-\beta}]; q/r)\cap [s,t]=J_0 \cup J_1. 
\end{align*}
Take any $i\in\{0,1\}$. Then by the mean value theorem, there exists $\theta\in [1-r^{-\beta}, 1+r^{-\beta}]$ with $L_i( \theta)\in [s,t]$ such that
\begin{equation}\label{Inequality-0case333}
\diam J_i \leq 2r^{-\beta} |L'_i (\theta )|= \frac{2r^{-\beta}}{ |E'(L_i(\theta))|}.
\end{equation}
Applying the mean value theorem again, there exists $\theta'$ in between $u_0$ and $L_i(\theta)$ such that
\[
E'(L_i(\theta) )=E'(u_0) + (L_i(\theta)-u_0 ) E''(\theta' )=(L_i(\theta)-u_0 ) E''(\theta' ),
\]
and hence Lemma~\ref{Lemma-SecondDerivE} implies that
\begin{equation}\label{Inequality-1case333}
|E'(L_i(\theta) )| \geq |L_i(\theta)-u_0| (\log (q_k/r))^2.
\end{equation}
Further, there exists $\theta''$ in between $u_0$ and $L_i (\theta)$ such that
\begin{equation*}
|\theta -m |  = |E (L_i(\theta) )-E(u_0)  |= |L_i(\theta)-u_0| |E'(\theta'')|.
\end{equation*}
   We note that $\beta \leq \theta''\leq \gamma$ since
\[
\beta \leq \min (u_0,  L_i (\theta)) \leq \theta'' \leq \max (u_0,  L_i (\theta))\leq  \gamma.
\]
If $E'(\theta'')\geq 0$, then since $Q_i\ll 1$ for every $i\in [k]$, we have 
\begin{equation}\label{Inequality-4case333}
|E'(\theta'')|=E'(\theta'')= \sum_{ i\in [k]} b_iQ_i^{\theta''} (\log Q_i) \leq  \sum_{\substack {i\in [k] \\ \text{ with } Q_i>1}} b_iQ_i^{\gamma} (\log Q_i)  \ll 1.
\end{equation}
If $E'(\theta'')\leq 0$, then similarly, 
\begin{equation}\label{Inequality-5case333}
|E'(\theta'')|= -E'(\theta'')=\sum_{ i\in[k]} b_iQ_i^{\theta''} \log (1/Q_i)  \leq \sum_{\substack {i\in [k] \\ \text{ with } Q_i<1}} b_iQ_i^{\beta} (\log (1/Q_i))  \ll \log r.
\end{equation}
Therefore, by combining the above, we have
\begin{align} \label{Inequality-6case333}
\diam J_i
&\ll \frac{r^{-\beta}}{ |E'(L_i(\theta))|} \ll \frac{r^{-\beta}}{ |L_i(\theta)-u_0| (\log (q_k/r))^2}\\ \nonumber
&\ll r^{-\beta} |E'(\theta'')||\theta-m|^{-1} \log^{-2} (q_k/r) \\ \nonumber
&\ll r^{-\beta}(\log r) |\theta-m|^{-1} \log^{-2} (q_k/r).
\end{align}

If $m\in [1-b_k/(2r), 1-X]$, then by $\theta\in [1-r^{-\beta},1+r^{-\beta}]$ and  $X\geq 2r^{-\beta}$, we have
\[
|\theta-m|\geq X -r^{-\beta} \gg X.
\] 
This inequality and \eqref{Inequality-6case333} lead to \eqref{Inequality-case333main1}. 

If $m\in [1-(\ell+1)b_k/(2r), 1- \ell b_k/(2r)]$ for some $1\leq \ell\leq 2r/b_k$, then we have $\ell/r \ll  |\theta -m |$, where the implicit constant does not depend on $\ell$. Therefore, by \eqref{Inequality-6case333}, we obtain $\diam J_i\ll  r^{-\beta+1}(\log r)\: \ell^{-1} \log^{-2} (q_k/r)$, which is \eqref{Inequality-case333main2}. 
\end{proof}

\begin{lemma}\label{Lemma-C331333} Let $\sigma\in (0,1)$. We have
\begin{align} \label{Inequality-C331}
&\sum_{J\in \mathcal{C}_{220}^\nu (M) }(\diam J)^\sigma \ll_\sigma  \sum_{r\geq M} r^{\sigma+k-1} X^{\sigma/2},  \\ \label{Inequality-C333} 
&\sum_{J\in \mathcal{C}_{222}^\nu (M)}  \sum_{i\in \{0,1\}}(\diam J_i)^\sigma \ll_\sigma \sum_{r\geq M}\left( r^{-\sigma\beta}+r^{-\sigma(\beta-3)-1}+\frac{r^{-(\beta-2)\sigma-1}}{X^{\sigma}}\right)r^k\log^{\sigma+1} r. 
\end{align}
\end{lemma}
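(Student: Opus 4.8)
The plan is to substitute the diameter estimates of Lemmas~\ref{Lemma-Case331}, \ref{Lemma-Case332}, and \ref{Lemma-Case333} into the covering $\calC_{33}(M)=\calC_{331}(M)\cup\calC_{332}(M)\cup\calC_{333}(M)$ and to control, via Lemma~\ref{Lemma-atmostone}, how many $q\in I^\nu$ can contribute for each fixed $r$. By Lemma~\ref{Lemma-Case332} every $J\in\calC_{332}(M)$ is empty, so that family contributes nothing. For \eqref{Inequality-C331}, note that membership $J(q;r)\in\calC_{331}(M)$ forces $m(q/r)\in[1-X,1+r^{-\beta}]$, an interval of length $X+r^{-\beta}<b_k/r$ once $M$ is large, because $X\in(2r^{-\beta},b_k/(2r))$. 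Hence Lemma~\ref{Lemma-atmostone}, applied with $H$ equal to this interval, shows that for each $r\geq M$ and each $(q_1,\dots,q_{k-1})$ there is at most one $q_k\in I^{(1)}_k$ with $J(q;r)\in\calC_{331}(M)$, so the number of such $J$ with a given $r$ is $\ll\prod_{j<k}\#I^{(\nu_j)}_j\ll r^{k-1}$. Multiplying by $(\diam J)^\sigma\ll(rX^{1/2})^\sigma$ from Lemma~\ref{Lemma-Case331} and summing over $r\geq M$ gives \eqref{Inequality-C331}.

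For \eqref{Inequality-C333}, fix $r\geq M$ and $i\in\{1,2\}$, and classify the $q\in I^\nu$ with $J(q;r)\in\calC_{333}(M)$ (so $m=m(q/r)<1-X$) by which of the slabs $[1-b_k/(2r),1-X]$ or $H_\ell=[1-(\ell+1)b_k/(2r),1-\ell b_k/(2r)]$, $1\leq\ell\leq\lfloor 2r/b_k\rfloor$, contains $m$. Each slab has length at most $b_k/(2r)<b_k/r$, so Lemma~\ref{Lemma-atmostone} again permits at most one $q_k$ per slab for each fixed $(q_1,\dots,q_{k-1})$. The slab $[1-b_k/(2r),1-X]$ therefore produces at most $\ll r^{k-1}$ values of $q$ in total; bounding $\log^{-2}(q_k/r)\ll r^2$ crudely (valid since $q_k/r\geq 1+1/r$) and inserting $\diam J_i\ll r^{-\beta}(\log r)X^{-1}\log^{-2}(q_k/r)$ from Lemma~\ref{Lemma-Case333} contributes $\ll\sum_{r\geq M}r^{k-1-(\beta-2)\sigma}X^{-\sigma}\log^\sigma r$, i.e., the third term of \eqref{Inequality-C333}. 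For the slabs $H_\ell$, fix $(q_1,\dots,q_{k-1})$ and list the contributing values as $q^{(1)}_k<q^{(2)}_k<\cdots$; since the last-coordinate map $p\mapsto m(Q_1,\dots,Q_{k-1},p/r)$ is strictly decreasing on the relevant range by Lemma~\ref{Lemma-atmostone}, and at most one $q_k$ lies in each $H_\ell$, the slab indices $\ell^{(1)}<\ell^{(2)}<\cdots$ are strictly increasing, whence $\ell^{(a)}\geq a$; moreover $q^{(a)}_k\geq r+a$ gives $\log(q^{(a)}_k/r)\geq\log(1+a/r)\gg a/r$, so $\log^{-2}(q^{(a)}_k/r)\ll(r/a)^2$. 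Feeding these into $\diam J_i\ll r^{-\beta+1}(\log r)\ell^{-1}\log^{-2}(q_k/r)$ of Lemma~\ref{Lemma-Case333} bounds the $a$-th term by $\ll r^{(-\beta+3)\sigma}(\log r)^\sigma a^{-3\sigma}$; summing over $a$ via $\sum_a a^{-3\sigma}\ll 1+r^{1-3\sigma}\log r$ and then over the $\ll r^{k-1}$ choices of $(q_1,\dots,q_{k-1})$ and over $i$ contributes $\ll\sum_{r\geq M}(r^{k-\sigma\beta}+r^{k-1-\sigma(\beta-3)})\log^{\sigma+1}r$, i.e., the first two terms of \eqref{Inequality-C333}. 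Adding the three contributions yields \eqref{Inequality-C333}.

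The delicate point is the $H_\ell$ part: the factor $\log^{-2}(q_k/r)$ in Lemma~\ref{Lemma-Case333} blows up like $r^2$ when $q_k$ is just above $r$, and summing that bound slabwise with only the trivial count from Lemma~\ref{Lemma-atmostone} overshoots the target by a factor of order $r^{1-\sigma}$. The remedy is the monotonicity bookkeeping above: because the $q_k$'s are distinct integers exceeding $r$ and their $m$-values are strictly decreasing and spread across distinct slabs, the $a$-th such $q_k$ satisfies both $q^{(a)}_k\geq r+a$ and $\ell^{(a)}\geq a$, so the product $\ell^{-\sigma}\log^{-2\sigma}(q_k/r)$ is dominated by $r^{2\sigma}a^{-3\sigma}$, whose sum over $a$ is harmless (the excess power of $r$ is reabsorbed by the $r^{-\beta+1}$ prefactor). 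The only facts that need careful verification are these two inequalities, and the integrality of the $q_k$ together with Lemma~\ref{Lemma-atmostone} are exactly what make them work.
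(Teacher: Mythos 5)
Your proposal is correct and follows essentially the same route as the paper: one $q_k$ per slab per $(q_1,\dots,q_{k-1})$ via Lemma~\ref{Lemma-atmostone}, the crude bound $\log^{-2\sigma}(q_k/r)\ll r^{2\sigma}$ for the slab adjacent to $1-X$, and for the $H_\ell$ family the same monotonicity bookkeeping giving $\ell^{(a)}\geq a$ and $q_k^{(a)}\geq r+a$, hence the summable $a^{-3\sigma}$. The only cosmetic difference is that you enumerate by increasing $q_k$ while the paper enumerates by increasing $\ell$ (the same ordering), and your version even fixes a sign slip in the paper's assertion about the monotonicity of $q_k(i)$.
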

\begin{proof}Take arbitrary $\sigma\in (0,1)$. Since the length of $[1-X,1+r^{-\beta}]$ is less than $b_k/r$ by $X<b_k/(2r)$,  Lemmas~\ref{Lemma-atmostone} and \ref{Lemma-Case220} imply that  
\begin{align*}
\sum_{J\in \mathcal{C}_{220}^\nu (M) }(\diam J)^\sigma &= \sum_{r\geq M} \sum_{q_1,\ldots, q_{k-1} } \sum_{\substack{q_k \in I^{(\nu_k)}_k \text{with}\\ u_0\in (\beta,\gamma), \\ m\in [1-X,1+r^{-\beta} ] } }  (\diam J(q;r))^\sigma\\
&\ll_\sigma \sum_{r\geq M} r^{\sigma+ k-1} X^{\sigma/2},
\end{align*}
where $(q_1,\ldots, q_{k-1})$ runs over $I_1^{(\nu_1)}\times \cdots \times I_{k-1}^{(\nu_{k-1})}$. Therefore, we conclude 
\eqref{Inequality-C331}. 

Let us next prove \eqref{Inequality-C333}. We see that
\begin{align*}
&\sum_{J\in \mathcal{C}_{222}^\nu (M)}  \sum_{i\in \{0,1\}}(\diam J_i)^\sigma\\
&\leq \sum_{i\in \{0,1\}}\sum_{r\geq M} \sum_{q_1,\ldots ,q_{k-1}}\sum_{\substack{q_k \in I^{(\nu_k)}_k\text{with}\\ u_0\in (\beta,\gamma) \\ m\in [1-b_k/(2r),1-X ] } } (\diam J_i)^\sigma\\
&+\sum_{i\in \{0,1\}}\sum_{r\geq M} \sum_{q_1,\ldots ,q_{k-1}}\sum_{1\leq \ell\leq 2r/b_k} \sum_{\substack{q_k \in I^{(\nu_k)}_k \text{with}\\ u_0\in (\beta,\gamma) \\ m\in [1-(\ell+1)b_k/(2r), 1- \ell b_k/(2r)] } } (\diam J_i)^\sigma\\
&\eqqcolon S_0+S_1.
\end{align*}
By \eqref{Inequality-case333main1} and Lemma~\ref{Lemma-atmostone}, we obtain 
\begin{align*}
S_0&\ll\sum_{r\geq M} r^{-\beta\sigma+k-1}(\log^\sigma r) X^{-\sigma} \log^{-2\sigma} ((1+r)/r)\ll \sum_{r\geq M} r^{-(\beta-2)\sigma+k-1}(\log^\sigma r) X^{-\sigma}. 
\end{align*}
To estimate the upper bounds for $S_1$, we define
\begin{gather*}
H_\ell=[1-(\ell+1)b_k/(2r), 1- \ell b_k/(2r)],\\
\begin{aligned}
&A(q_1,\ldots, q_{k-1};r)=\{\ell\in[1,2r/b_k]_{\mathbb{Z}}\colon \text{ there exists }q_k\in I^{(\nu_k)}_k\text{ such that}\\ 
&m(Q_1,\ldots, Q_{k-1}, q_k/r)\in H_\ell \quad \& \quad u_0(Q_1,\ldots, Q_{k-1}, q_k/r)\in (\beta,\gamma) \}.
\end{aligned}
\end{gather*}

Let $A(q_1,\ldots, q_{k-1};r)=\{\ell_1,\ldots, \ell_h\}$ with $\ell_1<\cdots<\ell_h$ and $h\leq 2r/b_k$. By combining Lemma~\ref{Lemma-atmostone} and $\diam H_{\ell}=b_k/(2r)$, for every $j \in [h]$ there uniquely exists $q_k \in I_k^{(\nu_k)}$ such that 
\begin{equation}\label{eq-optional1}
m(Q_1,\ldots, Q_{k-1}, q_k/r)\in H_{\ell_j}\quad \& \quad u_0(Q_1,\ldots, Q_{k-1}, q_k/r)\in (\beta,\gamma). 
\end{equation}
Let $q_k(j)$ be such $q_k$. Then, Lemma~\ref{Lemma-atmostone} implies that $q_k(j')>q_k(j)$ for all  $1\leq j'<j\leq k$ since by \eqref{eq-optional1}, we have
\[
m(Q_1,\ldots, Q_{k-1}, q_k(j')/r) - m(Q_1,\ldots, Q_{k-1}, q_k(j)/r)\geq 0.
\]
Therefore, by \eqref{Inequality-case333main2},  we obtain
\begin{align*}
S_1&= \sum_{i\in \{0,1\}}\sum_{r\geq M} \sum_{q_1,\ldots ,q_{k-1}}\sum_{\ell\in A(q_1,\ldots,q_{k-1};r)}\sum_{\substack{q_k \in I^{(\nu_k)}_k\: \text{with} \\u_0\in (\beta,\gamma) \\ m\in H_\ell } } (\diam J_i)^\sigma\\
&\ll_\sigma \sum_{r\geq M} \sum_{q_1,\ldots, q_{k-1}} \sum_{j=1}^h   r^{-\sigma(\beta-1)}(\log^\sigma r) \ell_j^{-\sigma} \log^{-2\sigma} (q_k(j)/r).
\end{align*}
Here, we have $q_k(j)\geq r+j$ for every $j\in [h]$ since $q_k(j)\in I_k^{(1)}=(r, B_kr)$ and $(q_k(j))_{j=1}^h$ is monotonically increasing.  Furthermore, we also have $\ell_j\geq j$ since $(\ell_j)_{j=1}^h$ is monotonically increasing. Therefore, by $h\leq 2r/b_k$, we obtain
\begin{align*}
S_1&\ll \sum_{r\geq M} \sum_{q_1,\ldots, q_{k-1}} \sum_{j=1}^h r^{-\sigma(\beta-1)}(\log^\sigma r) j^{-\sigma} \log^{-2\sigma} (1+j/r) \\
&\ll_\sigma \sum_{r\geq M} r^{k-1} r^{-\sigma(\beta-3)}(\log^\sigma  r)\sum_{1\leq j\leq 2r/b_k} j^{-3\sigma}\\
&\ll_\sigma \sum_{r\geq M} r^{-\sigma(\beta-3)+k-1}(\log^\sigma r) (1+r^{1-3\sigma}\log r)\\
&\ll \sum_{r\geq M} (r^{-\sigma(\beta-3)+k-1}(\log^\sigma r) + r^{-\sigma\beta+k}(\log^{\sigma+1} r) ).
\end{align*}
By combining $S_0$ and $S_1$, we conclude \eqref{Inequality-C333}.
\end{proof}

\section{Proof of Theorem~\ref{Theorem-genMain2}}\label{Section-Proof}
Fix any $n\in \mathbb{N}$. Let $\beta$, $s$, $t$, $\gamma$ be real numbers with $3n+4<\beta <s<t<\gamma$. Take arbitrary $k\in [n]$ and positive real numbers $b_1,\ldots, b_k$. By Lemma~\ref{Lemma-AtoB1}, it suffices to show that 
\begin{equation}\label{Equation-Goal}
\Haus \mathcal{B}_{b_1,\ldots, b_k}(s,t) \leq  3k/(s-4)
\end{equation}
Let $\sigma\in (3k/(\beta-4),1)$. Let $M_0$ be a sufficiently large parameter, and take $M\geq M_0$.  Let $\calC^\nu (M)$ be as in Section~\ref{Section-General} for every $\nu \in \{0,1\}^k$. We recall that 
\begin{equation}\label{Equation-Case0}
\mathcal{B}(s,t)\subseteq  \bigcup_{\nu \in \{0,1\}^k} \calC^\nu (M)=\calC_0 (M) \cup  \calC_1 (M) \cup \calC_2 (M)
\end{equation}
to obtain  
\[
\sum_{J \in \bigcup_{\nu \in \{0,1\}^k} \calC^\nu (M)} (\diam J)^\sigma   \leq \left(\sum_{J\in \calC_0 (M) \cup  \calC_1 (M)} + \sum_{J\in \calC_2 (M)}\right) (\diam J)^\sigma. 
\] 
Since $\sigma>3k/(\beta-4)>(k+1)/\beta$,  the inequality \eqref{Series-Case1Case2} implies 
\begin{equation}
\sum_{J\in \calC_0 (M) \cup \calC_1 (M) } (\diam J)^\sigma<\infty.
\end{equation}
Moreover, by recalling the discussion in Section~\ref{Section-nonsimple}, we decompose 
\begin{equation}\label{Equation-Case3}
\calC_2 (M)=\bigcup_{\nu} (\calC_{20}^\nu(M)\cup \calC_{21}^\nu(M)\cup \calC_{22}^\nu(M)), 
\end{equation}
 where $\nu=(\nu_1,\ldots, \nu_k)$ runs over $\{0,1\}^k$ with $\#\{\nu_1,\ldots, \nu_k\}>1$.  Take an arbitrary such $\nu$. Without loss of generality, we may assume that $\nu_k=1$ by renumbering. 

 Since $\sigma>3k/(\beta-4)>\max ((k+1)/\beta,\ k/(\beta-2) )$, Lemmas~ \ref{Lemma-series31} and \ref{Lemma-series32} lead to
\begin{equation*}
\sum_{J\in \calC_{20}^\nu (M) \cup \calC_{21}^\nu (M) } (\diam J)^\sigma <\infty.
\end{equation*}

Let us next discuss $\calC_{22}^\nu(M)$.  Let $X$ be a real parameter in $(2r^{-\beta}, b_k/(2r))$ depending only on $r$, where we will choose $X=r^{2/3(1-\beta)}$.  Let $J_0$ and $J_1$ be as in \eqref{definition-J}. By Lemmas~\ref{Lemma-Case332} and \ref{Lemma-C331333}, we have
\begin{align}\label{Inequality-upperCase33}
&\sum_{J\in \calC_{22}^\nu (M)} (\diam J)^{-\sigma} \\ \nonumber
&=\sum_{J\in \calC_{220}^\nu(M)}  (\diam J)^\sigma + \sum_{J\in \calC_{221}^\nu(M)}  (\diam J)^\sigma+ \sum_{J\in \calC_{222}^\nu(M)} \sum_{i\in \{0,1\}} (\diam J_i)^\sigma\\ \nonumber
&\ll \sum_{r\geq M} \biggl(r^{\sigma+k-1} X^{\sigma/2}  +( r^{-\sigma\beta}+r^{-\sigma(\beta-3)-1}+r^{-(\beta-2)\sigma-1}X^{-\sigma})r^k\log^{\sigma+1} r\biggr). 
\end{align}
To be optimized, we choose $X=r^{2(1-\beta)/3}$ by solving $r^{\sigma+k-1} X^{\sigma/2}=r^{-(\beta-2)\sigma+k-1}X^{-\sigma}$. Note that $X\in (2r^{-\beta},b_k/(2r))$ holds since $M_0$ is sufficiently large and $\beta>3n+4\geq 7$. Therefore, the most right-hand side of \eqref{Inequality-upperCase33} is 
\[
\ll \sum_{r\geq M}  (r^{-\sigma\beta}+r^{-\sigma(\beta-3)-1}+ r^{-(\beta-4)\sigma/3-1} )r^k\log^{\sigma+1} r. 
\]
By $\beta>5/2$, we have $-\sigma(\beta-3)-1< -(\beta-4)\sigma/3-1$.  Further,  $\sigma>3k/(\beta-4)$ implies that $-\sigma\beta\leq   -(\beta-4)\sigma/3-1$.  Thus, we obtain
\begin{equation}\label{Inequality-allC3}
\sum_{J\in \calC_{22}^\nu (M)} (\diam J)^{-\sigma} \ll  \sum_{r\geq M} r^{-(\beta-4)\sigma/3+k-1} \log^{\sigma+1} r
\end{equation}
which converges if $-(\beta-4)\sigma/3+k-1<-1$ \textit{i.e.} $\sigma>3k/(\beta-4)$.

By combining \eqref{Equation-Case0} to \eqref{Inequality-allC3} and by taking $M\to \infty$, we have $\mathcal{H}_\infty^\sigma (\mathcal{B}(s,t) )=0$ for all $\sigma>3k/(\beta-4) $. Thus, the definition of the Hausdorff dimension implies $\Haus\mathcal{B}(s,t) \leq 3k/(\beta-4)$. By choosing $\beta\to s$, we conclude \eqref{Equation-Goal}. Furthermore, $\Haus \mathcal{B}(s,t)<1$ implies $\mathcal{L}( \mathcal{B}(s,t))=0$ by \eqref{H3}. Therefore, we conclude Theorem~\ref{Theorem-genMain2}.

\section*{Acknowledgement}
The author was supported by JSPS KAKENHI Grant Numbers JP22KJ0375 and JP25K17223.

\end{document}